\DeclareSymbolFontAlphabet{\mathbb}{AMSb}
\DeclareSymbolFontAlphabet{\mathbbl}{bbold}
\def\@endtheorem{\endtrivlist}
\theoremstyle{plain}
\newtheorem*{theorem*}{\protect\TheoremName}
\newtheorem*{acknowledgement*}{\protect\AcknowledgmentName}
\newtheorem*{algorithm*}{\protect\AlgorithmName}
\newtheorem*{assumption*}{\protect\AssumptionName}
\newtheorem*{assumptions*}{\protect\AssumptionsName}
\newtheorem*{axiom*}{\protect\AxiomName}
\newtheorem*{condition*}{\protect\ConditionName}
\newtheorem*{conditions*}{\protect\ConditionsName}
\newtheorem*{case*}{\protect\CaseName}
\newtheorem*{claim*}{\protect\ClaimName}
\newtheorem*{conclusion*}{\protect\ConclusionName}
\newtheorem*{conjecture*}{\protect\ConjectureName}
\newtheorem*{corollary*}{\protect\CorollaryName}
\newtheorem*{criterion*}{\protect\CriterionName}
\newtheorem*{definition*}{\protect\DefinitionName}
\newtheorem*{lemma*}{\protect\LemmaName}
\newtheorem*{notation*}{\protect\NotationName}
\newtheorem*{problem*}{\protect\ProblemName}
\newtheorem*{proposition*}{\protect\PropositionName}
\newtheorem*{solution*}{\protect\SolutionName}
\newtheorem*{summary*}{\protect\SummaryName}
\newtheorem{theorem}{\protect\TheoremName}[section]
\newtheorem{definition}[theorem]{\protect\DefinitionName}
\newtheorem{lemma}[theorem]{\protect\LemmaName}
\newtheorem{proposition}[theorem]{\protect\PropositionName}
\theoremstyle{definition}
\newtheorem*{example*}{\protect\ExampleName}
\newtheorem*{exercise*}{\protect\ExerciseName}
\newtheorem*{remark*}{\protect\RemarkName}
\newtheorem{remark}[theorem]{\protect\RemarkName}
\newtheorem*{question*}{\protect\QuestionName}
\newtheorem*{questions*}{\protect\QuestionsName}
\newcommand{\TheoremName}{}
\newcommand{\AcknowledgmentName}{}
\newcommand{\AlgorithmName}{}
\newcommand{\AssumptionName}{}
\newcommand{\AssumptionsName}{}
\newcommand{\QuestionName}{}
\newcommand{\QuestionsName}{}
\newcommand{\AxiomName}{}
\newcommand{\ConditionName}{}
\newcommand{\ConditionsName}{}
\newcommand{\CaseName}{}
\newcommand{\ClaimName}{}
\newcommand{\ConjectureName}{}
\newcommand{\ConclusionName}{}
\newcommand{\CorollaryName}{}
\newcommand{\CriterionName}{}
\newcommand{\DefinitionName}{}
\newcommand{\LemmaName}{}
\newcommand{\NotationName}{}
\newcommand{\ProblemName}{}
\newcommand{\PropositionName}{}
\newcommand{\SolutionName}{}
\newcommand{\SummaryName}{}
\newcommand{\ExampleName}{}
\newcommand{\RemarkName}{}
\newcommand{\ExerciseName}{}
  \renewcommand{\TheoremName}{Theorem}
  \renewcommand{\AcknowledgmentName}{Acknowledgments}
  \renewcommand{\AlgorithmName}{Algorithm}
  \renewcommand{\AssumptionName}{Assumption}
  \renewcommand{\AssumptionsName}{Assumptions}
  \renewcommand{\QuestionName}{Question}
  \renewcommand{\QuestionsName}{Questions}
  \renewcommand{\AxiomName}{Axiom}
  \renewcommand{\ConditionName}{Condition}
  \renewcommand{\ConditionsName}{Conditions}
  \renewcommand{\CaseName}{Case}
  \renewcommand{\ClaimName}{Claim}
  \renewcommand{\ConjectureName}{Conjecture}
  \renewcommand{\ConclusionName}{Conclusion}
  \renewcommand{\CorollaryName}{Corollary}
  \renewcommand{\CriterionName}{Criterion}
  \renewcommand{\DefinitionName}{Definition}
  \renewcommand{\LemmaName}{Lemma}
  \renewcommand{\NotationName}{Notation}
  \renewcommand{\ProblemName}{Problem}
  \renewcommand{\PropositionName}{Proposition}
  \renewcommand{\SolutionName}{Solution}
  \renewcommand{\SummaryName}{Summary}
  \renewcommand{\ExampleName}{Example}%
  \renewcommand{\ExerciseName}{Exercise}%
  \renewcommand{\RemarkName}{Remark}%
  \renewcommand{\TheoremName}{Theorem}
  \renewcommand{\AcknowledgmentName}{Anerkennungen}
  \renewcommand{\AlgorithmName}{Algorithmus}
  \renewcommand{\AssumptionName}{Voraussetzung}
  \renewcommand{\AssumptionsName}{Voraussetzungen}
  \renewcommand{\AxiomName}{Axiom}
  \renewcommand{\ConditionName}{Bedingung}
  \renewcommand{\ConditionsName}{Bedingungen}
  \renewcommand{\CaseName}{Fall}
  \renewcommand{\ClaimName}{Behauptung}
  \renewcommand{\ConjectureName}{Vermutung}
  \renewcommand{\ConclusionName}{Folgerung}
  \renewcommand{\CorollaryName}{Korollar}
  \renewcommand{\CriterionName}{Kriterium}
  \renewcommand{\DefinitionName}{Definition}
  \renewcommand{\LemmaName}{Lemma}
  \renewcommand{\NotationName}{Notation}
  \renewcommand{\ProblemName}{Problem}
  \renewcommand{\PropositionName}{Satz}
  \renewcommand{\SolutionName}{L\"osung}
  \renewcommand{\SummaryName}{Zusammenfassung}
  \renewcommand{\ExampleName}{Beispiel}%
  \renewcommand{\ExerciseName}{\"Ubung}%
  \renewcommand{\RemarkName}{Bemerkung}%
  \renewcommand{\TheoremName}{Je\'wrhma}
  \renewcommand{\AcknowledgmentName}{Euqarist\'ies}
  \renewcommand{\AlgorithmName}{Alg\'orijmos}
  \renewcommand{\AssumptionName}{Up\'ojesh}
  \renewcommand{\AssumptionsName}{Upoj\'eseis}
  \renewcommand{\AxiomName}{Ax\'iwma}
  \renewcommand{\ConditionName}{Pro\"up\'ojesh}
  \renewcommand{\ConditionsName}{Pro\"upoj\'eseis}
  \renewcommand{\CaseName}{Per\'iptwsh}
  \renewcommand{\ClaimName}{Isqurism\'os}
  \renewcommand{\ConjectureName}{Eikas\'ia}
  \renewcommand{\ConclusionName}{Sump\'erasma}
  \renewcommand{\CorollaryName}{P\'orisma}
  \renewcommand{\CriterionName}{Krit\'hrio}
  \renewcommand{\DefinitionName}{Orism\'os}
  \renewcommand{\LemmaName}{L\'hmma}
  \renewcommand{\NotationName}{Shmeiograf\'ia}
  \renewcommand{\ProblemName}{Pr\'oblhma}
  \renewcommand{\PropositionName}{Pr\'otash}
  \renewcommand{\SolutionName}{L\'ush}
  \renewcommand{\SummaryName}{Per\'ilhyh}
  \renewcommand{\ExampleName}{Par\'adeigma}%
  \renewcommand{\ExerciseName}{\'Askhsh}%
  \renewcommand{\RemarkName}{Parat\'hrhsh}%
\newcommand{\itemEq}[1]{%
         \begingroup%
         \setlength{\abovedisplayskip}{0pt}%
         \setlength{\belowdisplayskip}{0pt}%
         \parbox[c]{\linewidth}{\begin{flalign}#1&&\end{flalign}}%
         \endgroup}
\renewenvironment{proof}[1][\proofname]{\par
\pushQED{\hfill$\blacksquare$}%
\normalfont \topsep6\p@\@plus6\p@\relax
\trivlist
\item\relax
{\bfseries
#1\@addpunct{.}}\hspace\labelsep\ignorespaces
}{%
\popQED\endtrivlist\@endpefalse
}
\newcommand{\strongly}{%
  \mathrel{
    \resizebox{1.4em}{0.88ex}{$\rightarrow$}
}}
\newcommand{\weakly}{%
  \mathrel{
    \resizebox{1.4em}{0.88ex}{$\rightharpoonup$}
}}
\newcommand{\wstar}{%
  \mathrel{\vbox{\offinterlineskip\ialign{%
    \hfil##\hfil\cr
    $\scriptstyle*\,$\cr
    \noalign{\kern 0.12ex}
    \resizebox{1.4em}{0.88ex}{$\rightharpoonup$}\cr
}}}}
\newcommand{\twoweakly}{%
  \mathrel{\vbox{\offinterlineskip\ialign{%
    \hfil##\hfil\cr
    $\scriptstyle2\,$\cr
    \noalign{\kern 0.12ex}
    \resizebox{1.4em}{0.88ex}{$\rightharpoonup$}\cr
}}}}
\newcommand{\twostrongly}{%
  \mathrel{\vbox{\offinterlineskip\ialign{%
    \hfil##\hfil\cr
    $\scriptstyle2\,$\cr
    \noalign{\kern 0.12ex}
    \resizebox{1.4em}{0.88ex}{$\rightarrow$}\cr
}}}}
\def\XXint#1#2#3{{\setbox0=\hbox{$#1{#2#3}{\int}$ }\hspace{0.17em}
\vcenter{\hbox{$#2#3$ }}\kern-.585\wd0}}
\DeclareMathOperator*{\supp}{supp}
\DeclareMathOperator{\dist}{dist}
\newcommand{\lnorm}{\left\|}
\newcommand{\rnorm}{\right\|}
\newcommand{\defeq}{\mathrel{\hspace{-0.1ex}\mathrel{\rlap{\raisebox{0.3ex}{$\m@th\cdot$}}\raisebox{-0.3ex}{$\m@th\cdot$}}\hspace{-0.73ex}\resizebox{0.55em}{0.84ex}{$=$}\hspace{0.67ex}}\hspace{-0.25em}}
\newcommand{\eqdef}{\hspace{-0.25em}\mathrel{\hspace{0.67ex}\resizebox{0.55em}{0.84ex}{$=$}\hspace{-0.73ex}\mathrel{\rlap{\raisebox{0.3ex}{$\m@th\cdot$}}\raisebox{-0.3ex}{$\m@th\cdot$}}\hspace{-0.1ex}}}
\newcommand{\refpart}[2]{\hyperref[#1]{\ref*{#1}--\textit{#2.}}}
\newcommand{\N}{\mathbb{N}}
\newcommand{\R}{\mathbb{R}}
\newcommand{\C}{\mathbb{C}}
\begin{document}
\selectlanguage{english}

\title{Ground states for a nonlocal cubic-quartic Gross-Pitaevskii equation}
\author{Yongming Luo\thanks{Institut f\"{u}r Mathematik, Universit\"{a}t Kassel, 34132 Kassel, Germany}
\ and \
Athanasios Stylianou$^*$
}
\maketitle

\begin{abstract}
We prove existence and qualitative properties of ground state solutions to a generalized nonlocal 3rd-4th order Gross-Pitaevskii equation. Using a mountain pass argument on spheres and constructing appropriately localized Palais-Smale sequences we are able to prove existence of real positive ground states as saddle points. The analysis is deployed in the set of possible states, thus overcoming the problem that the energy is unbounded below.  We also prove a corresponding nonlocal Pohozaev identity with no rest term, a crucial part of the analysis.
\end{abstract}

\footnotetext[1]{\textbf{Keywords:} nonlocal mixed-order Gross-Pitaevskii equation, mountain pass on sphere}
\footnotetext[2]{\textbf{2010 AMS Subject Classification:} 35Q55, 49J35, 35B09}

\section{Introduction and main results}
We study existence of standing waves of the equation
\begin{equation}
 \label{GNLGPE} i\,\partial_t \psi=-\frac{1}{2}\,\Delta \psi+\lambda_1\,|\psi|^2\,\psi+\lambda_2\,(K*|\psi|^2)\,\psi+\lambda_3\,|\psi|^3\,\psi,\quad x\in\mathbb R^3,\ t>0,
\end{equation}
under the side constraint $\lnorm \psi(t)\rnorm_2^2=c$, as ground states of the corresponding energy. We assume that $\lambda_3<0$ and that $K$ is a convolution kernel of the form
\begin{equation*}
 \label{K} K(x)= \frac{x_1^2+x_2^2-2x_3^2}{|x|^5}.
\end{equation*}

This type of equations arises in the modelling of dipolar Bose-Einstein condensates. For $\lambda_3>0$, the equation models quantum fluctuations within the condensate, the so-called Lee-Huang-Yang correction; when the $\lambda_3$-term is of 5th order, the equation models three-body interactions (for more details and further references see \cite{LuoStylianou2019Pre} and references therein). The latter case is energy critical and its analysis is very complex due to the inherent loss of compactness for optimizing sequences. The energy critical case was studied in \cite{LuoStylianou2019Pre} for $\lambda_3>0$. Here we focus on the case $\lambda_3<0$ and restrict ourselves to the energy sub-critical case. This allows us to develop a method for obtaining saddle points on the constraint manifold, dealing first with the difficulty of the mixed order terms; the energy critical case will be a subject of future research. For this work, we use and appropriately modify the ideas from \cite{BellazziniJeanjean2016,Bellazzini2013}.

If we use the Fourier transform
\begin{equation*}
 \mathcal{F}(f)(\xi)=\widehat{f}(\xi)\defeq \int_{\R^3}f(x)e^{-ix\cdot \xi}\;dx
\end{equation*}
on $K$, we get
\begin{equation}\label{rangeKhat}
\widehat{K}(\xi)=\frac{4\pi}{3}\,\frac{2\xi_3^2-\xi_1^2-\xi_2^2}{|\xi|^2}\in\Big[-\frac{4}{3}\pi,\frac{8}{3}\pi\Big];
\end{equation}
see \cite[Lemma 2.3]{Carles2008}. Equation \eqref{GNLGPE} possesses a dynamically conserved energy functional, defined by
\begin{equation}
 \label{energy} E(u)\defeq \int_{\mathbb R^3}\Big\{\frac{1}{2}|\nabla u|^2+\frac{\lambda_1}{2}\,|u|^4+\frac{\lambda_2}{2}\,\big(K*|u|^2\big)\,|u|^2+\frac{2}{5}\,\lambda_3\,|u|^5\Big\}\;dx,
\end{equation}
which with the help of Parseval's identity becomes
\begin{equation*}
 \label{energy1} E(u)=\frac{1}{2}\|\nabla u\|_2^2+ \frac{1}{2}\frac{1}{(2\pi)^3}\,\int_{\R^3}\big(\lambda_1+\lambda_2\,\widehat{K}(\xi)\big)\,\big|\widehat{|u|^2}(\xi)\big|^2\;d\xi+\frac{2}{5}\lambda_3\,\|u\|_5^5.\\
\end{equation*}
For an arbitrary $c>0$, we look for ground states of \eqref{energy}, that is, for functions $u\in H^1(\mathbb R;\mathbb C)$ such that $\lnorm u\rnorm^2_{2}=c$, that are critical points of $E$ and study their qualitative properties. Note that a ground or excited state of $E$ corresponds to standing waves for \eqref{GNLGPE} through the Ansatz $\psi(x,t)=e^{-i\,\beta\,t}\,u(x)$; $\beta$ denotes the so-called \textit{chemical potential}. After making the standing wave Ansatz in \eqref{GNLGPE}, the problem reduces into finding a function $u:\R^3\strongly\C$ satisfying the side constraint $\lnorm u\rnorm^2_{2}=c$ and a number $\beta\in\R$ such that $(u,\beta)$ satisfies the equation
\begin{equation}\label{solution}
-\frac{1}{2}\Delta u +\lambda_1|u|^2u+\lambda_2 (K*|u|^2)u+\lambda_3|u|^3u+\beta u=0.
\end{equation}
\begin{definition}
We call $(u,\beta)\in H^1(\R^3;\C)\times \R$ a solution to equation \eqref{solution}, if the latter is satisfied in $H^{-1}(\R^3;\C)$ (with no side constraints).
\end{definition}
Here, the number $c>0$ denotes the mass of the solution. The rescaling we used (the same as in \cite{BellazziniJeanjean2016}) is such, that $c=1$ corresponds to the physical problem. The reason for studying the equation for a general $c>0$ is of technical nature and becomes apparent later in the paper.
\begin{definition}\label{definition of notations}
We will make extensive use of the following quantities:
\begin{align*}
A(u){}&\defeq \|\nabla u\|_2^2,\\[0.5em]
B(u){}&\defeq \frac{1}{(2\pi)^3}\,\int_{\R^3}\big(\lambda_1+\lambda_2\,\widehat{K}(\xi)\big)\,\big|\widehat{|u|^2}(\xi)\big|^2\;d\xi,\\[0.5em]
C(u){}&\defeq \lambda_3\|u\|_5^5,\\[0.5em]
Q(u){}& \defeq A(u)+\frac{3}{2}B(u)+\frac{9}{5}C(u),\\
\Xi {}&\defeq \frac{1}{(2\pi)^3}\max\bigg\{\Big|\lambda_1-\lambda_2\frac{4\pi}{3}\Big|,\Big|\lambda_1+\lambda_2\frac{8\pi}{3}\Big|\bigg\}.
\end{align*}
\end{definition}
\begin{remark}
 The virial functional $Q$ is closely related to Pohozaev identities: it is defined as such, so that critical points will satisfy $Q(u)=0$ (Lemma \ref{betaneq0}).
\end{remark}
\begin{remark}
Due to \eqref{rangeKhat}, we have $|\lambda_1+\lambda_2 \widehat{K}(\xi)|\leq \Xi$ for all $\xi\in\R^3$. This is an optimal inequality, since it becomes an equality (with plus or minus sign) for $\lambda_1$, $\lambda_2$ having the same sign and $\widehat K(\xi)=-4\pi/3$ or $\widehat K(\xi)=8\pi/3$. We thus have the following optimal estimate
\begin{equation}
 \label{B_estimate} |B(u)|\leq \Xi\,\lnorm u\rnorm_4^4, \text{ for all }\lambda_1,\lambda_2\in \mathbb R \text{ and } u\in L^2(\mathbb R^3)\cap L^4(\mathbb R^3).
\end{equation}
\end{remark}
\begin{remark}
Note that with the above definitions the following identity holds:
\begin{equation*}
E(u)=\frac{1}{2}A(u)+\frac{1}{2}B(u)+\frac{2}{5}C(u).
\end{equation*}
\end{remark}
\begin{definition}\label{definition of sets}
For a positive number $c>0$, the sets $S(c)$ and $V(c)$ are defined by
\begin{align*}
S(c)&\defeq\big\{u\in H^1(\R^3;\C):\|u\|_2^2=c\big\},\\
V(c)&\defeq \big\{u\in S(c):Q(u)=0\big\}.
\end{align*}
\end{definition}

Solutions to \eqref{solution} will be constructed as critical points of the energy $E$ in the constraint set $S(c)$ (For a more detailed exposition on the geometry of $S(c)$ as a Finsler manifold we refer to \cite{Berestycki1983} and references therein.)
\begin{remark}
 The energy functional $E$ is unbounded below on $S(c)$ for $\lambda_3<0$; see Lemma \refpart{infinity}{1}
\end{remark}
We want to study the existence of ground state solutions that appear as saddle points. To that end, in the spirit of \cite{BellazziniJeanjean2016,Bellazzini2013}, we give the following definitions:
\begin{definition}\label{ground_state}
 For an arbitrary  $c > 0$, we call $u_c \in S(c)$ a ground state, if it is a least-energy critical point on $S(c)$, i.e.,
 \begin{equation*}
  E(u_c)=\inf\Big\{E(u):u\in S(c)\text{ and } E|_{S(c)}'(u)=0\Big\},
 \end{equation*}
 where $E|_{S(c)}'(u)\in T^*_uS(c)$, i.e., $E|_{S(c)}':S(c)\strongly T^*S(c)$.
\end{definition}
\begin{remark}
 Note that the Lagrange multiplier theorem (see e.g. \cite[Corollary 3.5.29]{AbrahamEtAl1988}) implies that for any ground state $u$ exists $\beta\in\R$ such that $(u,\beta)$ is a solution.
\end{remark}
\begin{definition}[Mountain pass geometry]\label{def of MPG}Given $c > 0$, we say that $E$ has a mountain pass geometry on $S(c)$ at level $\gamma(c)\in \R$, if there exists $K_c > 0$, such that
\begin{equation}\label{MPG_prop}
\gamma(c)=\inf_{g\in\Gamma_c}\max_{t\in[0,1]}E\big(g(t)\big)>\sup_{g\in \Gamma_c}\max\{E\big(g(0)\big),E\big(g(1)\big)\},
\end{equation}
where
\begin{align*}
\Gamma_c \defeq  \big\{g\in C([0,1],S(c)): g(0)\in A_{K_c}\text{ and }E\big(g(1)\big)<0\big\}
\end{align*}
and
\begin{equation*}
 A_{K_c}\defeq \big\{u\in S(c):\|\nabla u\|_2^2\leq K_c\big\}.
\end{equation*}
\end{definition}

We point out that under certain general conditions, the existence of a Palais-Smale sequence is already guaranteed by the mountain pass geometry of the energy landscape, see, for instance, \cite[Theorem 4.1]{Ghoussoub1993}. However, boundedness of the sequence constructed in such way, can not be obtained directly in general. More precisely, the Palais-Smale sequence $\{u_n\}_{n\in\N}$ governed by \cite[Theorem 4.1]{Ghoussoub1993} implies the boundedness of the sequence $\big\{E(u_n)\big\}_{n\in\N}$. However, as long as the constant $\lambda_3$ is negative, the gradient energy $\|\nabla u_n\|_2^2$ can not be directly estimated from above by $E(u_n)$. In order to fix this problem, the pseudo-gradient technique introduced in \cite{Berestycki1983} and used, among others, by \cite{Bellazzini2013,BellazziniJeanjean2016}, will be utilized to obtain the sought for boundedness of the Palais-Smale sequence. We refer to Lemmas \ref{ps1} and \ref{ps2} for details.

Note that a ``self-bound'' crystal of droplets in a dilute Bose-Einstein condensate has been observed in \cite{KadauEtAl2016} where it was suggested that under specific circumstances it is a good candidate for a ground state. One year later, this suggestion was verified in \cite{WenzelEtAl2017}, so that, in contrast to \cite{BellazziniEtAl2017}, we do not expect that planar radial symmetry to be prominent in ground states. Still, they are positive, which conforms to the custom in  physics to pick ground states as the nodeless solutions. This is due to the fact that, as we will prove in the following,
\begin{equation}
 \label{virial_approach}\gamma(c)=\inf \big\{ E(u): u\in V(c)\big\}
\end{equation}
and that $Q(u)=0$ is a natural constraint for a critical point of $E$ on $S(c)$. The latter will be the unique mountain pass on the path $t\mapsto t^{3/2}\,u(t\,x)$ (and any such path will intersect the set over which the infimum is taken in the above).

As already stated, we cannot use standard tools from critical point theory, but we construct a special Palais-Smale sequence $\{u_n\}_{n\in\mathbb N}$ at level $\gamma(c)$, which concentrates around the set
$$V(c)\defeq \{u\in S(c)\text{ and } Q(u)=0\}.$$
To be more precise, we give the following:
\begin{definition}[$Q$-vanishing Palais-Smale sequence]
We call a sequence $\{u_n\}_{n\in\N}\subset S(c)$ a $Q$-vanishing Palais-Smale sequence at level $l\in\mathbb R$ for $E$ on $S(c)$, if the following hold:
\begin{enumerate}
\item $\displaystyle\lim_{n\to\infty}E(u_n)=l+o(1)$,
\item $\|E'|_{S(c)}(u_n)\|_{T^* S(c)}=o(1)$,
\item $Q(u_n)=o(1)$.
\end{enumerate}
\end{definition}

For such sequences, we are not able to apply concentration-compactness arguments, but proceed in the following way: We fist show that the weak limit will minimize $E$ in $V(c_1)$ for some $c_1\leq c$. We then use monotonicity properties of the mapping $c\mapsto \gamma (c)$ to show that $E(u_n-u)=o(1)$. This, in turn, will imply the strong convergence of the sequence and finally the fact that $E(u)=\gamma(c)$.

\begin{theorem}\label{Theorem1}
Let $c>0$ and $\lambda_3<0$.
\begin{enumerate}
\item There exists $K_c>0$ such that $E$ has mountain pass geometry on $S(c)$ at level $\gamma(c)>0$ (see Definition \ref{MPG}) and possesses no local minimizers on $S(c)$. Furthermore, the energy level $\gamma(c)$ is only determined by the value $c$ and is independent of the choice of $K_c$.
\item There exists $c_0>0$, depending only on $\lambda_1,\lambda_2,\lambda_3$, such that for all $c\in(0,c_0)$, equation \eqref{solution} possesses a nontrivial solution $(u_c,\beta_c)\in S(c)\times (0,\infty)$ such that $u_c$ is a ground state on $S(c)$ (in sense of Definition \ref{ground_state}) with
\begin{align*}
E(u_c)=\inf\Big\{E(u):u\in S(c)\text{ and } E|_{S(c)}'(u)=0\Big\}=\gamma(c).
\end{align*}
Moreover, if either
$$\lambda_2>0\ \text{ and }\ \lambda_1+\frac{8\pi}{3}\lambda_2\leq 0,$$
or
$$\lambda_2\leq 0\ \text{ and }\ \lambda_1-\frac{4\pi}{3}\lambda_2\leq 0$$
is satisfied, then $c_0=\infty$.
\item Let $(u,\beta)\in S(c)\times \R$ be a ground state. Then $(|u|,\beta)$ is also a ground state. In particular, $|u(x)|>0$ for all $x\in\R^3$ and there exists a constant $\theta\in\R$ such that $u=e^{i\theta}|u|$.
\end{enumerate}
\end{theorem}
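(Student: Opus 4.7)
I organize the proof around the virial functional $Q$ and the mass-preserving scaling $u_t(x)\defeq t^{3/2}u(tx)$, under which $A(u_t)=t^2A(u)$, $B(u_t)=t^3B(u)$ and $C(u_t)=t^{9/2}C(u)$. The three parts are proved in order, the bulk of the work being in Part (ii).

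\textbf{Part (i) -- MPG and absence of local minimizers.} Using the Gagliardo-Nirenberg inequalities together with $|B(u)|\leq \Xi\lnorm u\rnorm_4^4$ gives on $S(c)$ a bound
\[
 E(u)\geq \tfrac12\lnorm\nabla u\rnorm_2^2 - C_1(c)\lnorm\nabla u\rnorm_2^{3/2} - C_2(c)\lnorm\nabla u\rnorm_2^{5/2},
\]
which is strictly positive on some annulus $\{a\leq \lnorm\nabla u\rnorm_2^2\leq K_c\}$ depending on $c$. For any low-gradient $u_0\in S(c)$, the scaling path $t\mapsto u_{0,t}$ sends $E(u_{0,t})\to-\infty$ as $t\to\infty$, since the $t^{9/2}C(u_0)$ term with $C(u_0)<0$ eventually dominates, and it must cross the annulus. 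This produces a path connecting $A_{K_c}$ to $\{E<0\}$ and yields the MPG at a level $\gamma(c)>0$. Since $\gamma(c)$ can be characterized as the unique maximum of $t\mapsto E(u_t)$ along any such scaling ray, it is independent of the admissible choice of $K_c$. The same one-parameter analysis shows that any critical point in $S(c)$ is a strict local maximum of $E$ along its scaling ray, hence no local minimizer can exist.

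\textbf{Part (ii) -- Ground state via a $Q$-vanishing PS sequence.} Applying the pseudo-gradient construction of \cite{Bellazzini2013,BellazziniJeanjean2016} (Lemmas \ref{ps1} and \ref{ps2}) I would produce a $Q$-vanishing PS sequence $\{u_n\}\subset S(c)$ at level $\gamma(c)$. Combining $E(u_n)=\tfrac12A+\tfrac12B+\tfrac25C$ with $Q(u_n)=A+\tfrac32B+\tfrac95C=o(1)$ expresses $A(u_n)$ in terms of $B(u_n)$ and $C(u_n)$; Gagliardo-Nirenberg on $S(c)$ then bounds $|B|$ and $|C|$ subcritically by $A$, forcing $\sup_n A(u_n)<\infty$. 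A Lions concentration-compactness analysis, after a translation sequence, extracts a non-trivial weak limit $u$ satisfying \eqref{solution} for some $\beta_c\in\R$: vanishing is excluded because $Q(u_n)=o(1)$ combined with $\gamma(c)>0$ forces the density to stay away from zero, and dichotomy is excluded by the strict sub-additivity $\gamma(c)<\gamma(c_1)+\gamma(c-c_1)$ for $0<c_1<c$, itself derived from the scaling $u_t$ and the monotonicity of $c\mapsto\gamma(c)$. Testing \eqref{solution} with $u$ and combining with the Pohozaev identity $Q(u)=0$ gives $\beta_c c=\gamma(c)+\tfrac25 C(u)$, so a further Gagliardo-Nirenberg control of $|C(u)|$ in terms of $c$ yields $\beta_c>0$ for $c<c_0$. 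When $\lambda_1+\lambda_2\widehat K(\xi)\leq 0$ for all $\xi\in\R^3$ -- which by \eqref{rangeKhat} is exactly the pair of sign conditions in the statement -- one has $B(u)\leq 0$ identically, whence $|C(u)|\leq 2\gamma(c)$ and $\beta_c c\geq \tfrac15\gamma(c)>0$ for all $c>0$, so $c_0=\infty$.

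\textbf{Part (iii) and main obstacle.} Since $A,B,C,Q$ depend on $u$ only through $|u|$ and the diamagnetic inequality gives $A(|u|)\leq A(u)$, we obtain $|u|\in V(c)$ with $E(|u|)\leq E(u)=\gamma(c)$. Thus $|u|$ is itself a ground state, which forces equality in the diamagnetic inequality and hence $u=e^{i\theta}|u|$ for a constant $\theta\in\R$. Since $|u|$ solves the real elliptic equation $-\tfrac12\Delta |u|+\bigl(\lambda_1|u|^2+\lambda_2(K*|u|^2)+\lambda_3|u|^3+\beta_c\bigr)|u|=0$ with coefficients in $L^\infty_{\mathrm{loc}}$, elliptic regularity together with the strong maximum principle gives $|u(x)|>0$ everywhere. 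The main obstacle is the compactness step in Part (ii): the virial $Q$ couples three distinct scaling orders ($t^2,t^3,t^{9/2}$) and the dipolar kernel $B$ has indefinite sign, so strict sub-additivity of $\gamma$ cannot be read off a plain subcritical embedding but must be squeezed out of a delicate combination of the scaling-path analysis in Part (i) with strict monotonicity properties of $c\mapsto\gamma(c)$.
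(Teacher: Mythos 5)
Your overall architecture (mountain-pass geometry via the scaling $t\mapsto t^{3/2}u(tx)$, a $Q$-vanishing Palais--Smale sequence, Pohozaev identities for the sign of $\beta$, the diamagnetic inequality for Part 3) matches the paper's, but the decisive compactness step in Part 2 has a genuine gap. You propose to run Lions' concentration-compactness and to exclude dichotomy by strict sub-additivity of $\gamma$. The paper explicitly states it cannot argue this way, and for good reason: for a Palais--Smale sequence at a saddle level (as opposed to a minimizing sequence for a constrained infimum), the two pieces produced by dichotomy are not a priori elements of $V$ of their respective masses, so their energies cannot be bounded below by $\gamma(c_1)$ and $\gamma(c-c_1)$ without first establishing a full profile decomposition and a Pohozaev identity for each limiting profile --- none of which you supply. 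The paper's substitute is the elementary but crucial identity $3E(v)-Q(v)=\tfrac12 A(v)-\tfrac35 C(v)\ge 0$: applied to $v_n=u_n-u$ together with the Brezis--Lieb splitting and $Q(u)=0$, $Q(u_n)=o(1)$, it yields $E(u_n-u)\ge o(1)$ for free; the reverse inequality $E(u_n-u)\le o(1)$ then follows from $E(u)\ge\gamma(c_1)$ and the merely \emph{non-strict} monotonicity of $c\mapsto\gamma(c)$ (Lemma \ref{nonincreasing}); finally the positivity of $\beta$ is what recovers the $L^2$ mass via $\beta D(u_n-u)=o(1)$. Without this identity (or an equivalent substitute) your compactness argument does not close. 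Relatedly, your route to $\beta_c>0$ for small $c$ (``a further Gagliardo--Nirenberg control of $|C(u)|$'') is too optimistic: one must compare $|B(u)|$ against $|C(u)|$, and in the case $B(u)>0$ with $\|u\|_4<1$ this requires the lower bound $A(u)\ge \mathrm{C}\,(-\lambda_3)^{-4/5}c^{-1/5}$ extracted from $Q(u)=0$; a Gagliardo--Nirenberg estimate alone does not relate $|C(u)|$ to $\gamma(c)$.

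Part 3 contains two smaller but real gaps. First, $A$ does \emph{not} depend on $u$ only through $|u|$, and the diamagnetic inequality only gives $Q(|u|)\le Q(u)=0$, not $Q(|u|)=0$, so $|u|\in V(c)$ is not immediate; the paper repairs this by choosing $t\in(0,1]$ with $Q(|u|^t)=0$ and showing that the chain $\gamma(c)\le E(|u|^t)=E(|u^t|)\le E(u^t)\le E(u)=\gamma(c)$ forces $t=1$. Second, deducing that $|u|$ is a ground state from $|u|\in V(c)$ and $E(|u|)=\gamma(c)$ requires proving that $V(c)$ is a natural constraint, i.e., that a minimizer of $E$ on $V(c)$ is a critical point of $E$ on $S(c)$; the paper does this by computing the Lagrange multipliers for the two constraints and showing that $\mu_1\big(A(u)-\tfrac{27}{10}C(u)\big)=0$ forces $\mu_1=0$. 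Neither step appears in your sketch.
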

The proof of the above theorem will be given in a number of lemmas and propositions in the following sections. In the end of the paper we will give a summary and conclude the argument.

\section{The energy landscape}
First we study the geometry of the energy landscape. This will allow us to construct a $Q$-vanishing Palais-Smale sequence in the following sections that converges to a ground state. To that end we will use the following scaling (see for example \cite{Cazenave2003})
\begin{equation}\label{Cazenave1}
 u^t(x)\defeq t^{3/2}\,u(tx)\ \text{ for }t>0,
\end{equation}
under which $S(c)$ is invariant. One calculates
\begin{equation}\label{Cazenave2}
 \begin{aligned}
  A(u^t)&=t^2\,A(u),\\
  B(u^t)&=t^3\,B(u),\\
  C(u^t)&=t^{9/2}\,C(u),
 \end{aligned}
\end{equation}
and therefore
\begin{align}
E(u^t)&=\frac{t^2}{2}A(u)+\frac{t^3}{2}B(u)+\frac{2}{5}t^{9/2}C(u),\label{e(u^t)}\\
Q(u^t)&=t^2A(u)+\frac{3t^3}{2}B(u)+\frac{9}{5}t^{9/2}C(u).\label{Q(u^t)}
\end{align}
\begin{remark}\label{unstable_regime}
Since $\lambda_3<0$, we infer that $C(u)<0$ for $u\neq 0$.
\end{remark}
In the following lemma we study the behaviour of the various expressions with respect to the above rescaling.
\begin{lemma}\label{infinity}
Let $c>0$, $\lambda_3<0$ and $u\in S(c)$. Then:
\begin{enumerate}
\item $A(u^t),B(u^t),C(u^t),E(u^t),Q(u^t)\strongly 0$ as $t\strongly 0$;\\[0.5em]
$A(u^t)\strongly \infty$ and $E(u^t)\strongly-\infty$ as $t\strongly\infty$.
\item If $E(u)<0$ then $Q(u)<0$.
\item There exists $k_0>0$ not depending on $u$ such that, if $A(u)\leq k_0$ then $Q(u)>0$ and $E(u)>0$.
\end{enumerate}
\end{lemma}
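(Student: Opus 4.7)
The plan is to verify the three parts in order, using the scaling formulas \eqref{Cazenave2}, \eqref{e(u^t)}, \eqref{Q(u^t)} for parts (1) and (2), and Gagliardo--Nirenberg bounds for part (3).

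For \textbf{(1)}, I would simply read off the limits from the explicit scaling relations. Each of $A(u^t)$, $B(u^t)$, $C(u^t)$ is a positive power of $t$ times the corresponding quantity at $t=1$, so everything tends to $0$ as $t\to 0^+$. Since $u\in S(c)$ with $c>0$ forces $u\not\equiv 0$ (hence $A(u)>0$), we immediately get $A(u^t)=t^2 A(u)\to\infty$ as $t\to\infty$. For $E(u^t)\to-\infty$, I would note that, among the three summands in \eqref{e(u^t)}, the term $\tfrac{2}{5}t^{9/2}C(u)$ dominates for large $t$, and $C(u)<0$ by Remark \ref{unstable_regime}.

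For \textbf{(2)}, the trick is to form the linear combination in which the $B(u)$ contributions cancel:
\[
3E(u)-Q(u)=\tfrac{1}{2}A(u)-\tfrac{3}{5}C(u).
\]
The right-hand side is strictly positive because $A(u)>0$ and $C(u)<0$, so $Q(u)<3E(u)$; if $E(u)<0$, then $Q(u)<0$.

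For \textbf{(3)}, the key ingredient is the Gagliardo--Nirenberg inequality on $\mathbb R^3$, which gives
\[
\|u\|_4^4\le K_4\,\|u\|_2\,A(u)^{3/2},\qquad \|u\|_5^5\le K_5\,\|u\|_2^{1/2}\,A(u)^{9/4}.
\]
Combined with the optimal estimate \eqref{B_estimate}, this yields on $S(c)$
\[
|B(u)|\le \Xi K_4\,c^{1/2}A(u)^{3/2},\qquad |C(u)|\le|\lambda_3|K_5\,c^{1/4}A(u)^{9/4}.
\]
Using $C(u)<0$ and factoring $A(u)$ out of the definitions gives
\[
Q(u)\ge A(u)\Bigl[1-\tfrac{3}{2}\Xi K_4 c^{1/2}A(u)^{1/2}-\tfrac{9}{5}|\lambda_3|K_5 c^{1/4}A(u)^{5/4}\Bigr],
\]
and an analogous lower bound for $E(u)$ with bracketed limit $\tfrac{1}{2}$. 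Both brackets tend to strictly positive constants as $A(u)\to 0$, so there exists $k_0>0$ depending only on $c$, $\Xi$, $\lambda_3$ and the Gagliardo--Nirenberg constants, but not on the particular $u$, such that $Q(u)>0$ and $E(u)>0$ whenever $A(u)\le k_0$. The only substantive point is the Gagliardo--Nirenberg bookkeeping: because the relevant exponents $\tfrac{3}{2}$ and $\tfrac{9}{4}$ both exceed $1$, the linear term $A(u)$ genuinely dominates for small $A(u)$; the rest of the argument is algebraic.
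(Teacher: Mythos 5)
Your proof is correct and follows essentially the same route as the paper: reading off the limits from the scaling relations \eqref{Cazenave2}--\eqref{Q(u^t)} for part (1), cancelling $B(u)$ in the combination $3E(u)-Q(u)=\tfrac12 A(u)-\tfrac35 C(u)>0$ for part (2), and combining the Gagliardo--Nirenberg inequalities with the bound \eqref{B_estimate} to get $Q(u)\ge A(u)-\mathrm C_1 A(u)^{3/2}-\mathrm C_2 A(u)^{9/4}$ (and the analogue for $E$) for part (3). No gaps; the bookkeeping of the exponents $3/2$ and $9/4$ matches the paper's estimate \eqref{gagliardo-nirenberg and plancherel}.
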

\begin{proof}
\textit{1.} Notice that for $u\neq 0$, we have $C(u)<0$ due to Remark \ref{unstable_regime}. Then due to the precise expression of the terms given by \eqref{Cazenave2} to \eqref{Q(u^t)} and the fact that the $C(u)$ term in \eqref{e(u^t)} and \eqref{Q(u^t)} is leading for large $t$, we obtain the assertion.

\textit{2.} From \eqref{e(u^t)} and \eqref{Q(u^t)} it follows that
\begin{equation*}
Q(u)-3E(u)=-\frac{1}{2}A(u)+\frac{3}{5}C(u)< 0\Longrightarrow Q(u)<3 E(u).
\end{equation*}
From this we obtain the second statement.

\textit{3.} We will use the following Gagliardo-Nirenberg inequalities:
\begin{align*}
\|u\|_5&\leq \mathrm C\,\|\nabla u\|_2^{9/10}\,\|u\|_2^{1/10}=\mathrm C\,c^{1/20}\, A(u)^{9/20},\\
\|u\|_4&\leq \mathrm C\,\|\nabla u\|_2^{3/4}\,\|u\|_2^{1/4}=\mathrm C\, c^{1/8}\, A(u)^{3/8},
\end{align*}
where $\mathrm C>0$ is a given positive constant independent of $u$. Recall from \eqref{rangeKhat} that
\begin{equation*}
\widehat{K}(\xi)=\frac{4\pi}{3}\,\frac{2\xi_3^2-\xi_1^2-\xi_2^2}{|\xi|^2}\in\Big[-\frac{4}{3}\pi,\frac{8}{3}\pi\Big]
\end{equation*}
and
from Definition \ref{definition of notations} that
\begin{equation*}
\Xi =\frac{1}{(2\pi)^3}\max\bigg\{\Big|\lambda_1-\lambda_2\frac{4\pi}{3}\Big|,\Big|\lambda_1+\lambda_2\frac{8\pi}{3}\Big|\bigg\}.
\end{equation*}
Therefore, since $\lambda_3< 0$ is assumed, we can estimate from below as follows:
\begin{align}
Q(u)&=A(u)+\frac{3}{2}B(u)+\frac{9}{5}C(u)\notag \\
&\geq A(u)-\frac{3}{2}\Xi \|u\|_4^4+\mathrm C\,\lambda_3\,c^{1/4}\, A(u)^{9/4}\notag \\
&\geq A(u)-\frac{3}{2}\Xi \mathrm C\, c^{1/2}\, A(u)^{3/2}+\mathrm C\,\lambda_3\,c^{1/4}\, A(u)^{9/4}\notag \\
&=A(u)-\mathrm C_1\,A(u)^{3/2}-\mathrm C_2\,A(u)^{9/4},\label{gagliardo-nirenberg and plancherel}
\end{align}
with positive constants $\mathrm C_1, \mathrm C_2$, since $\|u\|_2^2=c$ is constant. From the last inequality we see that $Q(u)>0$ for sufficiently small $A(u)$, say $A(u)\in(0,k_0)$ for some sufficiently small $k_0>0$ which does not depend on $u$. Analogously, using similar estimates as given by \eqref{gagliardo-nirenberg and plancherel} we also obtain that $E(u)>0$ for all $A(u)\in(0,k_0)$ by choosing the previous $k_0$ sufficiently small. This completes the proof.
\end{proof}
\begin{remark}\label{unbounded}
The previous lemma asserts that $E$ is unbounded below on $S(c)$.
\end{remark}
\begin{remark}\label{convenience}
From the proof of Lemma \ref{infinity} one can directly deduce that the $k_0$ given by \refpart{infinity}{3} can be replaced by an arbitrary $\hat{k}_0$ with $0<\hat{k}_0<k_0$. This property will be useful by proving the mountain pass geometry, see Proposition \ref{MPG} below.
\end{remark}
\begin{lemma}\label{monotoneproperty}
Let $c>0$, $\lambda_3<0$ and $u\in S(c)$. Then:
\begin{enumerate}
\item $\displaystyle\frac{\partial}{\partial t}E(u^t)=\frac{Q(u^t)}{t}$, for all $t>0$.
\item There exists a $t^*>0$ such that $u^{t^*}\in V(c)$.
\item We have $t^*(u)<1$ if and only if $Q(u)<0$. Moreover, $t^*(u)=1$ if and only if $Q(u)=0$.
\item The following inequalities hold:
\begin{equation*}
Q(u^t) \left\{
\begin{array}{lr}
             >0, &t\in(0,t^*(u)) ,\\
             <0, &t\in(t^*(u),\infty).
             \end{array}
\right.
\end{equation*}
\item $E(u^t)<E(u^{t^*})$ for all $t>0$ with $t\neq t^*$.
\end{enumerate}
\end{lemma}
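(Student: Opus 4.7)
The plan is to reduce everything to the single-variable function
$h(t) \defeq Q(u^t)/t^2 = A(u) + \tfrac{3}{2}\,t\,B(u) + \tfrac{9}{5}\,t^{5/2}\,C(u),$
which encodes the rescaling formulas \eqref{Cazenave2}--\eqref{Q(u^t)} compactly, and then to exploit the sign information $A(u)>0$, $C(u)<0$ coming from $u\in S(c)$ with $c>0$ and Remark \ref{unstable_regime}.

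\textbf{Part 1.} I would simply differentiate the explicit formula \eqref{e(u^t)} with respect to $t$ and recognize the result as $\tfrac1t\big(t^2 A(u)+\tfrac{3}{2}t^3 B(u)+\tfrac{9}{5}t^{9/2}C(u)\big)=Q(u^t)/t$ by comparison with \eqref{Q(u^t)}. This is immediate.

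\textbf{Parts 2--4.} These all follow from the monotonicity and sign analysis of $h$. Since $u\neq 0$, $h(0^+)=A(u)>0$ and, because $C(u)<0$, $h(t)\to-\infty$ as $t\to\infty$; the intermediate value theorem therefore gives at least one positive zero. For uniqueness I would compute $h'(t)=\tfrac{3}{2}B(u)+\tfrac{9}{2}t^{3/2}C(u)$ and split two cases. If $B(u)\le 0$ then $h'(t)<0$ on $(0,\infty)$, so $h$ is strictly decreasing and has a unique zero $t^*$. If $B(u)>0$, then $h'$ has exactly one zero $t_{\max}>0$ and changes sign from positive to negative there, so $h$ strictly increases from $A(u)>0$ up to the value $h(t_{\max})>0$ and then strictly decreases to $-\infty$; hence again there is exactly one positive zero $t^*$, lying in $(t_{\max},\infty)$. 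In either case $h>0$ on $(0,t^*)$ and $h<0$ on $(t^*,\infty)$, which translates via $Q(u^t)=t^2 h(t)$ into Part 4. Part 2 is the existence of $t^*$ just established; Part 3 is the specialization at $t=1$, using $Q(u)=Q(u^1)$ and the fact that Part 4 pinpoints the sign of $Q(u^t)$ by comparison of $t$ with $t^*$, so $Q(u)<0\Leftrightarrow 1>t^*$ and $Q(u)=0\Leftrightarrow 1=t^*$.

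\textbf{Part 5.} Combining Parts 1 and 4, the function $t\mapsto E(u^t)$ has derivative $Q(u^t)/t$ which is strictly positive on $(0,t^*)$ and strictly negative on $(t^*,\infty)$. Hence $E(u^t)$ is strictly increasing then strictly decreasing and attains its unique maximum at $t=t^*$.

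The only place calling for genuine care is the uniqueness of $t^*$ when $B(u)>0$, since then $h$ is not globally monotone; this is handled by the single-sign-change argument for $h'$. Everything else is bookkeeping on the explicit polynomial-in-$t$ expressions from the rescaling identities.
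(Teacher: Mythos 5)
Your proof is correct and follows essentially the same strategy as the paper: reduce everything to a sign analysis of a single-variable function via a case split on the sign of $B(u)$, a single-sign-change argument for the derivative, and the intermediate value theorem. Your normalization $h(t)=Q(u^t)/t^2$ (which starts at $A(u)>0$) is a mild streamlining of the paper's choice $y(t)=Q(u^t)/t$ (which starts at $0$ and therefore forces the paper to go one derivative deeper, analyzing $y''$ to locate the sign change of $y'$); otherwise the two arguments are the same.
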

\begin{proof}
Using \eqref{e(u^t)} and \eqref{Q(u^t)}, one directly verifies that
\begin{equation*}
\frac{\partial}{\partial t}E(u^t)=tA(u)+\frac{3t^2}{2}B(u)+\frac{9}{5}t^{7/2}C(u)=\frac{1}{t}Q(u^t).
\end{equation*}
This proves the first statement. Now define $y(t)\defeq \frac{\partial}{\partial t}E(u^t)$. Then
\begin{align*}
y'(t)&=A(u)+3tB(u)+\frac{63}{10}t^{5/2}C(u),\\
y''(t)&=3B(u)+\frac{63}{4}t^{3/2}C(u).
\end{align*}
If $B(u)\leq 0$, then $y''(t)$ is negative on $(0,\infty)$; If $B(u) >0$, then $y''(t)$ is positive on $(0,-\frac{4B(u)}{21C(u)})$ and negative on $(-\frac{4B(u)}{21C(u)},\infty)$. Since $y'(0)=A(u)>0$ and $y'(t)\to -\infty$ as $t\to \infty$, we conclude simultaneously from both cases that there exists a $t_0>0$ such that $y'(t)$ is positive on $(0,t_0)$ and negative on $(t_0,\infty)$. From the expression for $y(t)$ we obtain that $\lim_{t\searrow 0^+}y(t)=0$ and $\displaystyle \lim_{t\strongly\infty}y(t)=-\infty$. Thus $y(t)$ has a zero at $t^*>t_0$, $y(t)$ is positive on $(0,t^*)$ and $y(t)$ is negative on $(t^*,\infty)$. Since $y(t)=\frac{\partial E(u^t)}{\partial t}=\frac{Q(u^t)}{t}$, the second and the fourth statements are shown. The left statements are also direct consequences of the previous claims. This completes the proof.
\end{proof}

Having proved the above lemmas, we are now able to obtain the mountain pass geometry property of $E$ on $S(c)$:
\begin{proposition}\label{MPG}
Let $c>0$ and $\lambda_3<0$. Then there exists some $K_c>0$ such that the energy $E$ has a mountain pass geometry on $S(c)$ at level $\gamma(c)>0$.
\end{proposition}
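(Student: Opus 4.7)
The plan is to follow the standard mountain-pass template: identify a ``basecamp'' $A_{K_c}$ on which $E$ is small, a ``barrier'' (a level set of $A$) through which every path in $\Gamma_c$ must pass and on which $E$ is bounded below by a strictly larger positive value, and then check the strict inequality in \eqref{MPG_prop}. The first ingredient, essentially contained in the proof of Lemma \ref{infinity}, is to combine the Gagliardo-Nirenberg inequalities with \eqref{B_estimate} to obtain constants $\alpha,\beta>0$, depending only on $c$ and $\lambda_1,\lambda_2,\lambda_3$, with $|B(u)|\le \alpha A(u)^{3/2}$ and $|C(u)|\le \beta A(u)^{9/4}$ for every $u\in S(c)$. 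Since $C(u)\le 0$ by Remark \ref{unstable_regime}, this yields the two-sided control
\begin{equation*}
\tfrac{1}{2}A(u)-\tfrac{\alpha}{2}A(u)^{3/2}-\tfrac{2\beta}{5}A(u)^{9/4}\;\le\; E(u)\;\le\;\tfrac{1}{2}A(u)+\tfrac{\alpha}{2}A(u)^{3/2}.
\end{equation*}

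Second, I would choose a ``barrier radius'' $\tilde K\in(0,k_0]$, with $k_0$ from Lemma \refpart{infinity}{3}, small enough that the lower bound forces $E(u)\ge \tilde K/4$ whenever $A(u)=\tilde K$, and then pick $K_c\in(0,\tilde K)$ small enough that the upper bound forces $\sup_{u\in A_{K_c}}E(u)<\tilde K/4$. Nonemptiness of $\Gamma_c$ follows from Lemma \refpart{infinity}{1}: for any fixed $u_0\in S(c)$ the scaling path $t\mapsto u_0^t$ satisfies $A(u_0^t)\to 0$ as $t\to 0^+$ and $E(u_0^t)\to -\infty$ as $t\to \infty$, so an affine reparameterization onto $[0,1]$ of a suitable subinterval $[t_0,t_1]\subset(0,\infty)$ yields a path in $\Gamma_c$.

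For the barrier crossing, fix any $g\in\Gamma_c$: then $A(g(0))\le K_c<\tilde K$, while $E(g(1))<0$ combined with the contrapositive of Lemma \refpart{infinity}{3} gives $A(g(1))>k_0\ge \tilde K$. By continuity of $A\circ g$ and the intermediate value theorem there exists $t_\star\in(0,1)$ with $A(g(t_\star))=\tilde K$, hence $\max_t E(g(t))\ge E(g(t_\star))\ge \tilde K/4$. Taking the infimum over $g$ yields $\gamma(c)\ge \tilde K/4>0$, while
\begin{equation*}
\sup_{g\in\Gamma_c}\max\{E(g(0)),E(g(1))\}\;\le\; \sup_{u\in A_{K_c}}E(u)\;<\;\tfrac{\tilde K}{4}\;\le\;\gamma(c),
\end{equation*}
which is exactly \eqref{MPG_prop}. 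The main technical step is the quantitative decoupling in the second paragraph: the barrier value $\tilde K/4$ must strictly exceed the cap on endpoint energies, which is arranged by taking $K_c$ substantially smaller than $\tilde K$, exploiting that for small gradient norm the upper bound on $E|_{A_{K_c}}$ is dominated by $K_c/2$ while the lower bound at $A=\tilde K$ behaves like $\tilde K/2$ to leading order. All remaining ingredients are elementary consequences of Lemma \ref{infinity} and the Gagliardo-Nirenberg estimates.
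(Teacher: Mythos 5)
Your proposal is correct and follows essentially the same strategy as the paper's proof: a Gagliardo--Nirenberg two-sided control of $E$ by $A$, a barrier level set $\{A(u)=\tilde K\}$ (the paper's $C_{k_2}$ with $\beta_{k_2}\ge k_2/4$) that every admissible path must cross by the intermediate value theorem, and a strictly smaller cap on the endpoint energies obtained by shrinking $K_c$ (the paper's $k_1=\min\{\hat k_1,\tfrac18 k_2\}$). The only cosmetic difference is that you deduce $A(g(1))>k_0$ directly from the contrapositive of the statement $E>0$ in Lemma \refpart{infinity}{3}, whereas the paper routes through $Q(g(1))<0$ via Lemma \refpart{infinity}{2}; both are valid.
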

\begin{proof}
We first define for $k>0$ the set
\begin{equation*}
C_k\defeq \{u\in S(c):A(u)=k\}
\end{equation*}
and the numbers
\begin{equation*}
 \alpha_k\defeq\sup_{u\in C_k}E(u)\ \text{ and }\ \beta_k\defeq\inf_{u\in C_k}E(u).
\end{equation*}
Note that $C_k\neq \emptyset$, since $A(u^t)=t^2\,A(u)$ for any $u\in S(c)$. We claim that:
\begin{equation}\label{k_1k_2}
 \begin{aligned}
  &\text{There exists }k_3>0\text{ such that for all } k_2\in (0,k_3]\text{ and }k_1\in(0,k_2)\\
  &\text{holds that }\alpha_k\leq\tfrac{1}{2}\beta_{k_2}\text{ for all }k\in[0,k_1].
 \end{aligned}
\end{equation}
\textit{Proof of the claim.} Let $k_2>0$ (to be determined). Estimating like \eqref{gagliardo-nirenberg and plancherel}, we obtain that there exist positive constants $\mathrm C_1, \mathrm C_2$ such that
$$E(u)\geq \frac{1}{2}A(u)-\mathrm C_1\,(A(u))^{3/2}-\mathrm C_2\,(A(u))^{9/4}.$$
We then define the real function
$$l(s)\defeq \frac{1}{2}s-\mathrm C_1\,s^{3/2}-\mathrm C_2\,s^{9/4}$$
for $s\in(0,\infty]$, which reads that $E(u)\geq l(A(u))$. Now let
$$g(s)\defeq l(s)-\frac{1}{4}s= \frac{s}{4}-\mathrm C_1\,s^{3/2}-\mathrm C_2\,s^{9/4},$$
then $g$ is positive for all sufficiently small positive $s$, say $s\in(0,k_3]$ for some sufficiently small $k_3>0$. This, in turn, implies that for $k_2\in (0,k_3]$ we have
$$E(u)\geq l(A(u))=l(k_2)\geq k_2/4$$
for all $u\in C_{k_2}$ and, therefore, $\beta_{k_2}\geq k_2/4$ for all $k_2\in(0,k_3]$. We pick a $k_2$ from $(0,k_3]$ and keep it fixed. Using Gagliardo-Nirenberg again as previously we obtain that there exist positive constants $\mathrm C_3,\mathrm C_4$ such that
\begin{align*}
E(u)&=\frac{1}{2}A(u)+\frac{1}{2}B(u)+\frac{2}{5}C(u)\notag \\
&\leq \frac{1}{2}A(u)+\mathrm C_3\,(A(u))^{3/2}+\mathrm C_4\,(A(u))^{9/4}\notag.
\end{align*}
Define
$$\hat{l}(s)\defeq \frac{1}{2}s+\mathrm C_3\,s^{3/2}+\mathrm C_4\,s^{9/4}$$
and
$$\hat{g}(s)\defeq \hat{l}(s)-s=- \frac{1}{2}s+\mathrm C_3\,s^{3/2}+\mathrm C_4\,s^{9/4},$$
then $\hat{g}(s)$ is negative for all sufficiently small positive $s$, say $s\in (0,\hat{k}_1]$ for some sufficiently small $\hat{k}_3>0$. This implies that
$$E(u)\leq \hat{l}(A(u))=\hat{l}(k)\leq k$$
for $u\in C_k$ and therefore, $\alpha_k\leq k$ for all $k\in (0,\hat{k}_1]$. Taking $k_1=\min\{\hat{k}_1,\frac{1}{8}k_2\}$, we finish the proof of claim \eqref{k_1k_2}.

Now, by construction of $k_2$, we see that $k_2$ can be replaced by an arbitrary $\hat{k}_2$ with $0<\hat{k}_2<k_2$. Thus we pick $k_2\leq k_0$, where $k_0$ is from Lemma \refpart{infinity}{3}. In order to apply certain contraposition argument below we also assume that $k_0=k_2$, which is valid due to Remark \ref{convenience}. We claim that taking $K_c=k_1$, where $k_1$ is given by \eqref{k_1k_2}, we are able to obtain \eqref{MPG_prop}, which completes the proof. Thus we let $\Gamma_c$ be given by
\begin{equation*}
\Gamma_c=\{g\in C([0,1],S(c)),g(0)\in A_{k_1},E\big(g(1)\big)<0\}.
\end{equation*}
First we show that $\Gamma_c\neq\emptyset$. Let $v\in S(c)$. Recall that $v^t(x)=t^{3/2}v(tx)$ and, in particular, $A(v^t)=t^2A(v)$. Therefore we can find a sufficiently small $t_1>0$ such that $A(v^{t_1})<k_1$. Moreover, from Lemma \refpart{infinity}{1} we can also pick a sufficiently large $t_2$ such that $E(v^{t_2})<0$. Now taking $g(t)\defeq v^{(1-t)t_1+t\,t_2}$, we see that $g$ is an element of $\Gamma_c$.

Thus let $g\in \Gamma_c$. Then $A\big(g(0)\big)\leq k_1<k_2$, which implies $Q\big(g(0)\big)>0$ (Lemma \refpart{infinity}{3}). Now since $E\big(g(1)\big)<0$, we infer from Lemma \refpart{infinity}{2} that $Q\big(g(1)\big)<0$, and therefore, by contraposition from Lemma \refpart{infinity}{3}, we have $A\big(g(1)\big)> k_2 $. Since $A\big(g(0)\big)<k_2$ and $A\big(g(1)\big)>k_2$, the continuity of $g$ implies that there exists a $t_0\in(0,1)$ such that $A\big(g(t_0)\big)=k_2$ and therefore $E\big(g(t_0)\big)\geq \beta_{k_2}$. Then,
\begin{align}
&\max_{t\in[0,1]} E\big(g(t)\big)\geq E\big(g(t_0)\big)\geq  \beta_{k_2}>\frac{1}{2}\beta_{k_2},\label{longineq1}\\
&\frac{1}{2}\beta_{k_2}\geq\alpha_{k_1}=\alpha_{A\big(g(0)\big)}\geq E\big(g(0)\big),\label{longineq2}\\
&E\big(g(0)\big)\geq\max\{E\big(g(0)\big),E\big(g(1)\big)\}\label{longineq}.
\end{align}
Here, \eqref{longineq1} and \eqref{longineq2} follow from the definitions of $\alpha_k$ and $\beta_k$ and the claim \eqref{k_1k_2}; \eqref{longineq} follows from the fact that $E\big(g(1)\big)<0$ and $E\big(g(0)\big)> 0$, since $E\big(g(0)\big)$ is positive for $g(0)\in A_{k_1}\subset A_{k_2}=A_{k_0}$ due to Lemma \refpart{infinity}{3} Finally, since the left-hand side of \eqref{longineq1} is bounded below by $\beta_{k_2}$ and the right-hand side of \eqref{longineq} is bounded above by $\frac{1}{2}\beta_{k_2}$, taking infimum and supremum over $g\in \Gamma_c$ in \eqref{longineq1} and \eqref{longineq} we obtain that
\begin{equation*}
\gamma(c)=\inf_{g\in\Gamma_c}\max_{t\in[0,1]}E\big(g(t)\big)>\sup_{g\in \Gamma_c}\max\{E\big(g(0)\big),E\big(g(1)\big)\},
\end{equation*}
which is exactly \eqref{MPG_prop} and this completes the desired proof.
\end{proof}

Next we show that $\gamma(c)$ is only determined by $c$ and is independent on the choice of $K_c$.
\begin{lemma}\label{infV(c)}
Let $c>0$ and $\lambda_3<0$. It holds that $\displaystyle \gamma(c)=\inf_{u\in V(c)}E(u)$.
\end{lemma}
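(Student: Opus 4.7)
The plan is to prove the identity by a two-sided inequality, exploiting the fibration $t \mapsto u^t$ to pass between the mountain-pass formulation and the virial set $V(c)$.

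For the upper bound $\gamma(c) \leq \inf_{u \in V(c)} E(u)$, I would fix an arbitrary $u \in V(c)$ and build an admissible path that realizes $E(u)$ as its maximum. By Lemma \refpart{infinity}{1}, I can choose $t_1 > 0$ small enough that $A(u^{t_1}) \leq K_c$, so $u^{t_1} \in A_{K_c}$, and $t_2 > 1$ large enough that $E(u^{t_2}) < 0$. The reparametrized curve $g(s) \defeq u^{(1-s)t_1 + s t_2}$ lies in $\Gamma_c$ since $S(c)$ is invariant under the scaling. Because $Q(u) = 0$, Lemma \refpart{monotoneproperty}{3} gives $t^*(u) = 1$, and Lemma \refpart{monotoneproperty}{5} then yields $\max_{s\in[0,1]} E(g(s)) = E(u^{t^*(u)}) = E(u)$. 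Taking the infimum over $u \in V(c)$ gives the desired inequality.

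For the lower bound $\gamma(c) \geq \inf_{u \in V(c)} E(u)$, the key observation is that \emph{every} path in $\Gamma_c$ must cross $V(c)$. Given $g \in \Gamma_c$, the choice $K_c = k_1 < k_2 = k_0$ made in the proof of Proposition \ref{MPG} guarantees $A(g(0)) \leq K_c < k_0$, so Lemma \refpart{infinity}{3} implies $Q(g(0)) > 0$. On the other side, $E(g(1)) < 0$ combined with Lemma \refpart{infinity}{2} forces $Q(g(1)) < 0$. Since $Q$ is continuous on $H^1(\R^3;\C)$ (the functionals $A$, $B$, $C$ are continuous via Sobolev embeddings for the $L^4$- and $L^5$-norms) and $g$ is continuous into $S(c)$, the intermediate value theorem produces some $t_0 \in (0,1)$ with $Q(g(t_0)) = 0$, i.e., $g(t_0) \in V(c)$. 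Therefore
\begin{equation*}
\max_{t\in[0,1]} E(g(t)) \geq E(g(t_0)) \geq \inf_{u\in V(c)} E(u),
\end{equation*}
and taking the infimum over $g \in \Gamma_c$ closes the argument.

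No serious obstacle is anticipated: the first inequality is essentially a bookkeeping exercise once one observes that the fibration $t \mapsto u^t$ produces a single-peaked curve whose peak occurs exactly on $V(c)$ (Lemma \ref{monotoneproperty}); the second is a continuity/intermediate value argument whose only subtlety is verifying that the strict sign of $Q$ at the endpoints is forced by the definition of $\Gamma_c$, which is precisely why the constants $k_1 < k_2 \leq k_0$ were arranged in Proposition \ref{MPG}. This last point — i.e., that the choice of $K_c$ is tight enough to put $g(0)$ in the region where Lemma \refpart{infinity}{3} applies — is the one place where the argument is not purely formal, and is the reason the lemma depends on the specific $K_c$ constructed in the mountain-pass proposition rather than on any $K_c$ yielding mountain-pass geometry.
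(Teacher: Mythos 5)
Your proposal is correct and follows essentially the same route as the paper: the upper bound via the rescaled path $g(s)=u^{(1-s)t_1+st_2}$ through a fixed $u\in V(c)$ together with Lemma \refpart{monotoneproperty}{3} and \refpart{monotoneproperty}{5}, and the lower bound via the sign change of $Q$ along any admissible path and the intermediate value theorem. Your extra explicitness about the continuity of $Q$ and the role of $K_c=k_1<k_2\leq k_0$ only spells out what the paper leaves implicit by referring back to the proof of Proposition \ref{MPG}.
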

\begin{proof}
Let $v\in V(c)$ so that $Q(v)=0$. Therefore from Lemma \refpart{monotoneproperty}{3} we conclude that $t^*(v)=1$. Due to equations \eqref{Cazenave2} and \eqref{e(u^t)}, we can find $0<t_1<1<t_2$ such that $v^{t_1}\in A_{k_1}$ and $E(v^{t_2})<0$. Now define
$$g(t)\defeq v^{(1-t)t_1+t\,t_2},$$
then $g\in\Gamma_c$. Using Lemma \refpart{monotoneproperty}{5} we get that
\begin{equation*}
\gamma(c)\leq \max_{t\in[0,1]}E\big(g(t)\big)=E(v^1)=E(v)
\end{equation*}
and therefore $\gamma(c)\leq\inf_{u\in V(c)}E(u)$. On the other hand, for a path $g\in \Gamma_c$, we obtain that $Q\big(g(0)\big)>0$ and $Q\big(g(1)\big)<0$ (as in the proof of Proposition \ref{MPG}). Thus due to continuity argument, any path in $\Gamma_c$ will cross $V(c)$. Hence
\begin{equation*}
\max_{t\in[0,1]}E\big(g(t)\big)\geq \inf_{u\in V(c)}E(u),
\end{equation*}
which implies that
$$\gamma(c)=\inf_{g\in\Gamma_c}\max_{t\in[0,1]}E\big(g(t)\big)\geq \inf_{u\in V(c)}E(u).$$
This completes the proof.
\end{proof}

Finally we prove the nonexistence of local minimizers.
\begin{proposition}\label{local minimizer}
Let $c>0$ and $\lambda_3<0$. Then the energy possesses no local minimizers on $S(c)$.
\end{proposition}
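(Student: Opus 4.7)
The plan is to exhibit, for every candidate local minimizer $u \in S(c)$, a one-parameter curve in $S(c)$ passing through $u$ along which the energy fails to have a local minimum at $u$. The natural curve is the rescaling $t \mapsto u^t$ defined by \eqref{Cazenave1}, which leaves $S(c)$ invariant, is continuous from $(0,\infty)$ into $H^1(\R^3;\C)$ (dilations are continuous on $H^1$), and equals $u$ at $t=1$. Since a local minimizer of $E|_{S(c)}$ would in particular be a local minimizer of the one-variable function $t \mapsto E(u^t)$ at $t=1$, it suffices to rule this out.

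I would then dichotomise according to whether $u \in V(c)$ or not, invoking Lemma \ref{monotoneproperty}. If $u \notin V(c)$, then $Q(u) \neq 0$ and by part \textit{1} of that lemma
\begin{equation*}
\left.\frac{\partial}{\partial t}E(u^t)\right|_{t=1}=\frac{Q(u)}{1}\neq 0,
\end{equation*}
so $t \mapsto E(u^t)$ is strictly monotone in a neighbourhood of $1$ and $t=1$ cannot be a local minimum. If instead $u \in V(c)$, then $t^*(u)=1$ by part \textit{3}, and part \textit{5} of the same lemma gives $E(u^t) < E(u^1) = E(u)$ for every $t \neq 1$; in that case $u$ is in fact a strict global maximum along the curve and again not a local minimum.

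In either scenario, every $H^1$-neighbourhood of $u$ in $S(c)$ contains elements $u^t$ with $E(u^t)<E(u)$, which contradicts the assumption that $u$ is a local minimizer and completes the proof. I do not foresee a serious obstacle: the whole argument is a one-line reduction to the already-established scaling analysis of Lemma \ref{monotoneproperty}, and the only background fact needed is the standard $H^1$-continuity of the map $t \mapsto u^t$, used merely to transfer closeness in the parameter $t$ into closeness in $H^1$.
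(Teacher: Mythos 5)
Your proposal is correct and follows essentially the same route as the paper: both restrict $E$ to the dilation curve $t\mapsto u^t$ through the putative minimizer and show $t=1$ cannot be a local minimum of the fiber map. The only cosmetic difference is that the paper finishes with an explicit second-derivative computation (eliminating $B(v)$ to reach the contradiction $-A(v)+\tfrac{27}{10}C(v)\geq 0$), whereas you invoke Lemma \ref{monotoneproperty}, parts \textit{3} and \textit{5}, to conclude that $t=1$ is a strict global maximum along the curve when $Q(u)=0$ --- which encapsulates the same computation.
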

\begin{proof}
 Assume to the contrary that there exists a relatively open subset $A \subseteq S(c)$ and $v\in A$, such that
 \begin{equation*}
  E(v)=\inf\big\{E(w):w\in A\big\}.
 \end{equation*}
 Recall that $v^t(x)=t^\frac{3}{2}\,v(tx)$, so that $v^t\in A$ for all $t\in(1-\varepsilon,1+\varepsilon)$ for some $\varepsilon>0$ small enough. Then, since the mapping $t\mapsto E(v^t)$ has a local minimum at $t=1$, it must hold that $\partial_t\big(E(v^t)\big)\big|_{t=1}=0$ and $\partial_{tt}\big(E(v^t)\big)\big|_{t=1}\geq 0$. Recall that
\begin{align*}
\partial_t \big(E(v^t)\big)&=tA(v)+\frac{3}{2}t^2B(v)+\frac{9}{5}t^{\frac{7}{2}}C(v),\\
\partial_{tt} \big(E(v^t)\big)&=A(v)+3tB(v)+\frac{63}{10}t^{\frac{5}{2}}C(v).
\end{align*}
Evaluating at $t=1$ and then eliminating $B(v^t)$, we obtain that $-A(v)+\frac{27}{10}C(v)\geq 0$, a contradiction.
\end{proof}
\section{Construction and compactness of a $Q$-vanishing Palais-Smale-sequence}
In the following we use the idea given in \cite{Bellazzini2013} to construct a specific $Q$-vanishing Palais-Smale sequence. Let us consider the set
\begin{equation}\label{L}
L=\{u\in V(c):E(u)\leq \gamma(c)+1\}.
\end{equation}
The set $L$ is a bounded set in $H^1(\R^3;\C)$, since for $u\in L$ it follows $Q(u)=0$, and therefore
\begin{align*}
\gamma(c)+1&\geq E(u)=E(u)-\frac{1}{3}Q(u)=\frac{1}{6}A(u)-\frac{1}{5}C(u)\geq \frac{1}{6}A(u)=\frac{1}{6}\|\nabla u\|_2^2>0.
\end{align*}
Together with the fact that $u\in S(c)$ we obtain the boundedness of $L$. Now let $R_0>0$ be given such that $L\subset B(0,R_0)$, where $B(0,R_0)$ is the ball in $H^1(\R^3;\C)$ with center $0$ and radius $R_0$.
\begin{lemma}[Existence of a $Q$-vanishing Palais-Smale sequence]\label{ps1} 
Let $c>0$, $\lambda_3<0$ and
\begin{equation*}
J_{\mu}\defeq \big\{u\in S(c): |E(u)-\gamma(c)|\leq\mu,\ \dist\big(u,V(c)\big)\leq 2\mu\text{ and } \|E|_{S(c)}'(u)\|_{T^*S(c)}\leq 2\mu\big\}.
\end{equation*}
Then for any $\mu>0$, $J_{\mu}\cap B(0,3R_0)\neq\emptyset$.
\end{lemma}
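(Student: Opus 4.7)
The plan is to argue by contradiction using a localized pseudo-gradient deformation on $S(c)$ combined with the scaling $u\mapsto u^t$ from \eqref{Cazenave1}, in the spirit of \cite{Berestycki1983,Bellazzini2013,BellazziniJeanjean2016}. Suppose for some $\mu>0$ (which may be taken small without loss, since the assertion is trivial for large $\mu$) that $J_\mu\cap B(0,3R_0)=\emptyset$. The goal is to deform a near-optimal element of $\Gamma_c$ into a path whose maximum energy falls strictly below $\gamma(c)$, contradicting the definition of the mountain pass level.

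First, I would pick a near-optimal path $g\in\Gamma_c$ with
\[
\max_{t\in[0,1]}E\big(g(t)\big)\leq \gamma(c)+\tfrac{\mu^2}{4}.
\]
The continuity/sign argument used in the proofs of Proposition \ref{MPG} and Lemma \ref{infV(c)}, namely $Q\big(g(0)\big)>0$ and $Q\big(g(1)\big)<0$, guarantees that $g$ meets $V(c)$. Any crossing point $u^*=g(t_0)$ lies in $L\subset B(0,R_0)$ and satisfies $|E(u^*)-\gamma(c)|\leq\mu^2/4<\mu$ as well as $\dist(u^*,V(c))=0<2\mu$, so the contradictory hypothesis forces
\[
\|E|_{S(c)}'(u^*)\|_{T^*S(c)}>2\mu.
\]
The endpoint bounds $E(g(0))\leq\tfrac{1}{2}\beta_{k_2}<\gamma(c)$ and $E(g(1))<0$ from the proof of Proposition \ref{MPG} additionally ensure that the endpoints of $g$ lie below $\gamma(c)-\mu$ for $\mu$ small enough.

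Next, on the open set
\[
U\defeq \big\{u\in B(0,3R_0):|E(u)-\gamma(c)|<\mu,\ \dist\big(u,V(c)\big)<2\mu\big\},
\]
the hypothesis yields the uniform bound $\|E|_{S(c)}'(u)\|_{T^*S(c)}>2\mu$. The standard Palais construction on the Finsler manifold $S(c)$ then produces a locally Lipschitz tangent pseudo-gradient vector field $X$ on $U$ with $\|X(u)\|_{H^1}\leq 1$ and $\langle E|_{S(c)}'(u),X(u)\rangle\leq-\mu$. A Urysohn cutoff $\chi$ supported in $U$ and identically $1$ on a compact neighborhood of $g([0,1])\cap V(c)$ localizes the field, and the flow $\phi_s$ of $\chi X$ is globally defined on $S(c)$, keeps $B(0,3R_0)$ invariant on a uniform time window $[0,s_0]$ (since $\|\chi X\|\leq 1$), and fixes all points outside $U$, in particular the endpoints of $g$.

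The main obstacle is controlling points of $g$ whose energy lies in $(\gamma(c)-\mu,\gamma(c)+\mu^2/4]$ but which are away from $V(c)$, where the pseudo-gradient above is inactive. I would handle this by precomposing the flow with a scaling adjustment, i.e.\ replacing $g(t)$ by $g(t)^{\tau(t)}$ for a continuous $\tau:[0,1]\to\R_+$ with $\tau(0)=\tau(1)=1$, exploiting the identity $\partial_\tau E(u^\tau)\big|_{\tau=1}=Q(u)$ from Lemma \ref{monotoneproperty} and the fact that $Q$ has definite sign off $V(c)$ to force a decrease of $E$ at such points as well. Choosing the cutoff, the time $s_0$, and the scaling profile $\tau$ successively small enough so that the uniform energy gain dominates the slack $\mu^2/4$ yields a path $\tilde g\in\Gamma_c$ with $\max_{t\in[0,1]}E\big(\tilde g(t)\big)<\gamma(c)$, which is the sought contradiction and proves the lemma.
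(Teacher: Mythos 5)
Your overall framework (contradiction plus a localized pseudo-gradient deformation of a near-optimal path) is the standard machinery behind results of this type, and indeed it is what the cited \cite[Lemma 3.1]{Bellazzini2013} carries out internally. However, there is a genuine gap at exactly the point you flag as ``the main obstacle'', and it is precisely the point that the paper's proof is designed to handle. You start from an \emph{arbitrary} near-optimal path $g\in\Gamma_c$; then points $g(t)$ with $E(g(t))$ close to $\gamma(c)$ need not lie near $V(c)$ nor in $B(0,3R_0)$, and on such points the contradiction hypothesis gives no gradient information at all. Your proposed repair --- precomposing with a continuous scaling profile $\tau(t)$, $\tau(0)=\tau(1)=1$, and using $\partial_\tau E(u^\tau)|_{\tau=1}=Q(u)$ --- does not close this gap: the energy decrease obtainable by scaling is of first order $|Q(u)|\,|\tau-1|$ and the beneficial direction of $\tau$ reverses each time the path crosses $V(c)$, so any continuous $\tau$ must pass through $1$ near such crossings. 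One therefore needs a uniform lower bound on $|Q|$ on the hand-off region $\{|E-\gamma(c)|\le\mu,\ \dist(\cdot,V(c))\ge 2\mu\}$, which is not available: since dilation $u\mapsto u^t$ is not uniformly continuous on bounded subsets of $H^1$, one can have $Q(u_n)\to 0$ while $\dist(u_n,V(c))$ stays bounded away from zero. In that region neither your vector field nor your scaling adjustment produces a quantified decrease, so the deformed path's maximum is not shown to drop below $\gamma(c)$.

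The paper avoids this entirely by \emph{choosing the path first}: it takes $u\in V(c)$ with $E(u)\le\gamma(c)+\varepsilon$ (possible by Lemma \ref{infV(c)}) and sets $g_\varepsilon(t)=u^{(1-t)\theta_1+t\theta_2}$. A Taylor expansion of $t\mapsto E(g_\varepsilon(t))$ at the unique maximum $t_\varepsilon$ (where $m(t_\varepsilon)=1$), with strictly negative second derivative $-(\theta_2-\theta_1)^2\zeta$ and a controlled third-order remainder, shows after choosing $\theta_2$ large (depending on $\varepsilon$) that $\{t:E(g_\varepsilon(t))\ge\gamma(c)\}\subset(t_\varepsilon-\varepsilon,t_\varepsilon+\varepsilon)$; a smooth-approximation argument ($v\in C_0^\infty$ close to $u$, so that $t\mapsto v^{m(t)}$ is Lipschitz in $H^1$) then gives $\dist(g_\varepsilon(t),V(c))\le\tilde\mu$ and $g_\varepsilon(t)\in B(0,2R_0)$ on that interval. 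Only after this localization is the deformation argument invoked, as a black box, via \cite[Lemma 3.1]{Bellazzini2013}. If you want a self-contained proof along your lines, you should replace your arbitrary $g$ by such a scaling path and supply the quantitative Taylor/approximation estimates; as written, the argument does not go through.
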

\begin{proof}
Define the set
\begin{align*}
\Lambda_\mu&\defeq\big\{u\in S(c):|E(u)-\gamma(c)|\leq\mu,\ \dist\big(u,V(c)\big)\leq 2\mu\big\}.
\end{align*}
In view of \cite[Lemma 3.1]{Bellazzini2013}, we point out that we only need to show that for all sufficiently small $\varepsilon>0$, we can construct a path $g_{\varepsilon}\in\Gamma_c$ satisfying:
\begin{itemize}
 \item \itemEq{g_{\varepsilon}(t)=u^{(1-t)\theta_1+t\theta_2}\ \text{ for some } u\in V(c)\ \text{and}\ 0<\theta_1<1<\theta_2<\infty,\label{varepsilon0}}
 \item \itemEq{\max_{t\in[0,1]}E\big(g_{\varepsilon}(t)\big)\leq \gamma(c)+\varepsilon\ \text{ and}\label{varepsilon1}}
 \item \itemEq{E\big(g_{\varepsilon}(t)\big)\geq\gamma(c)\Longrightarrow g_{\varepsilon}(t)\in\Lambda_{\frac{\tilde\mu}{2}}\cap B(0,2R_0).\label{varepsilon2}}
\end{itemize}
We point out that the necessity of proving the existence of $g_{\varepsilon}$ ist due to the presence of the higher order term $\lambda_3|u|^3 u$, which makes our calculation differ from the ones given in \cite{Bellazzini2013}.

Now we give the precise construction of $g_{\varepsilon}$. Let $u\in V(c)$ with $E(u)\leq \gamma (c)+\varepsilon$ (which is valid, since $\gamma(c)=\inf_{u\in V(c)}E(u)$ due to Lemma \ref{infV(c)}) and let $0<\theta_1<1<\theta_2<\infty$ be chosen such that $u^{\lambda_1}\in A_{K_c},\ E(u^{\lambda_2})<0$ (see Lemma \refpart{infinity}{1}). We define $g_{\varepsilon}(t)$ by
\begin{equation*}
g_{\varepsilon}(t)\defeq u^{(1-t)\theta_1+t\theta_2}.
\end{equation*}
From Lemma \refpart{monotoneproperty}{3} and \refpart{monotoneproperty}{5} it follows that
\begin{equation}\label{eq3}
\max_{t\in[0,1]}E\big(g_{\varepsilon}(t)\big)\leq \gamma(c)+\varepsilon.
\end{equation}
Recall that
\begin{align*}
E(u^t)&=\frac{t^2}{2}A(u)+\frac{t^3}{2}B(u)+\frac{2}{5}t^{9/2}C(u),\\
Q(u^t)&=t^2A(u)+\frac{3t^3}{2}B(u)+\frac{9}{5}t^{9/2}C(u).
\end{align*}
Let $m(t)=(1-t)\theta_1+t\theta_2$. Basic calculus shows
\begin{align*}
\frac{d^2}{d t^2}E\big(g_{\varepsilon}(t)\big)&=(\theta_2-\theta_1)^2\,\Big(A(u)+3m(t)\,B(u)+\frac{63}{10}\,m(t)^{5/2}\,C(u)\Big),\\
\frac{d^3}{dt^3}E\big(g_{\varepsilon}(t)\big)&=(\theta_2-\theta_1)^3\,\Big(3B(u)+\frac{63}{4}m(t)^{3/2}\,C(u)\Big).
\end{align*}
Now let $t_{\varepsilon}\defeq \frac{1-\theta_1}{\theta_2-\theta_1}\in(0,1)$, so that $m(t_{\varepsilon})=1$. Then using $Q(u)=0$, we obtain that
\begin{align*}
\frac{d^2}{d t^2}E\big(g_{\varepsilon}(t)\big)\bigg|_{t=t_{\varepsilon}}&=(\theta_2-\theta_1)^2\,\Big(A(u)+3B(u)+\frac{63}{10}C(u)\Big)\notag \\
&=(\theta_2-\theta_1)^2\,\Big(2Q(u)-A(u)+\frac{27}{10}C(u)\Big)\notag \\
&=(\theta_2-\theta_1)^2\, \Big(-A(u)+\frac{27}{10}C(u)\Big)\eqdef(\theta_2-\theta_1)^2\, (-\zeta)<0,
\end{align*}
since $C(u)<0$.

Now let $t\in(0,1)$ with $E\big(g_{\varepsilon}(t)\big)\geq \gamma(c)$. We first consider the case $t=t_{\varepsilon}-h$ with $h>0$. Since $u\in V(c)$, Lemma \refpart{monotoneproperty}{1} implies $\frac{d}{dt}E(g_\varepsilon(k))\big|_{k=t_\varepsilon}=0$. Thus using Taylor expansion we see that there exists some $s\in [t,t_\varepsilon]$ such that
\begin{align}\label{gammac leq gammac+epsilon}
\gamma(c)\leq E\big(g_{\varepsilon}(t)\big)&=E\big(g_{\varepsilon}(t_\varepsilon)\big)+\frac{1}{2}(-h)^2 \frac{d^2}{d t^2}E\big(g_{\varepsilon}(t_{\varepsilon})\big)+\frac{1}{6}(-h)^3\frac{d^3}{dt^3}E\big(g_{\varepsilon}(s)\big)\notag \\
&\leq \gamma(c)+\varepsilon-\frac{h^2}{2}(\theta_2-\theta_1)^2\zeta-\frac{1}{6}h^3\frac{d^3}{dt^3}E\big(g_{\varepsilon}(s)\big).
\end{align}
Now since $h\in(0,t_\varepsilon)=\Big(0,\frac{1-\theta_1}{\theta_2-\theta_1}\Big)$, we have $m(s)\in [\theta_1,1]$ and we infer that
\begin{align*}
\bigg|-\frac{1}{6}h^3\frac{d^3}{dt^3}E\big(g_{\varepsilon}(s)\big)\bigg|&\leq\frac{1}{6}(1-\theta_1)^3\,\Big(3|B(u)|-\frac{63}{4}C(u)\Big)\notag \\
&\leq \frac{1}{6}\Big(3|B(u)|-\frac{63}{4}C(u)\Big)=:\tilde\zeta>0.
\end{align*}
From \eqref{gammac leq gammac+epsilon} it follows
\begin{align*}
h^2\leq\frac{2(\varepsilon+\tilde\zeta)}{(\theta_2-\theta_1)^2\zeta}.
\end{align*}
Since $\theta_2$ can be chosen arbitrary large, we pick a $\theta_2$ with $(\theta_2-\theta_1)^2\geq \frac{2(\varepsilon+\tilde\zeta)}{\varepsilon^2\zeta}$, thus
\begin{align*}
0< h\leq \varepsilon.
\end{align*}

Next we deal with the case $t=t_\varepsilon+h$ with $h>0$. From $Q(u)=0$ we obtain that
\begin{align*}
\frac{d^3}{dt^3}E\big(g_{\varepsilon}(s)\big)&=(\theta_2-\theta_1)^3\,\Big(3B(u)+\frac{63}{4}m(s)^{3/2}\,C(u)\Big)\\
&=(\theta_2-\theta_1)^3\,\Big(-2A-\frac{18}{5}C(u)+\frac{63}{4}m(s)^{3/2}\,C(u)\Big).
\end{align*}
From $s\in (t_{\varepsilon},t)$ it follows that $m(s)\in \big(m(t_{\varepsilon}),m(t)\big)=\big(1,m(t)\big)$. We hence obtain that
\begin{align}
\frac{d^3}{dt^3}E\big(g_{\varepsilon}(s)\big)&=(\theta_2-\theta_1)^3\,\Big(-2A-\frac{18}{5}C(u)+\frac{63}{4}m(s)^{3/2}\,C(u)\Big)\nonumber\\
&\leq (\theta_2-\theta_1)^3\,\Big(-2A-\frac{18}{5}C(u)+\frac{63}{4}\,C(u)\Big)\nonumber\\
&=(\theta_2-\theta_1)^3\,\Big(-2A+\frac{243}{20}\,C(u)\Big)<0\label{negative}
\end{align}
for $s\in [t_{\varepsilon},t]$, since $C(u)<0$. Then doing a Taylor expansion as in \eqref{gammac leq gammac+epsilon} (notice $-h$ in \eqref{gammac leq gammac+epsilon} is now replaced by $h$ and the third order term in \eqref{gammac leq gammac+epsilon} is negative due to \eqref{negative}) we obtain
\begin{align*}
\gamma(c)\leq E\big(g_{\varepsilon}(t)\big)\leq \gamma(c)+\varepsilon-\frac{h^2}{2}(\theta_2-\theta_1)^2\zeta.
\end{align*}
Thus if $(\theta_2-\theta_1)^2\geq\frac{2}{\varepsilon\zeta}$, then $0< h\leq \varepsilon$.

Therefore, we infer that picking $\theta_2$ with
\begin{equation}\label{eq2}
 (\theta_2-\theta_1)^2=\max\bigg\{\frac{2}{\varepsilon\zeta}, \frac{2(\varepsilon+\tilde\zeta)}{\varepsilon^2\zeta}\bigg\}
\end{equation}
implies
\begin{equation*}
\big\{t\in[0,1]:E\big(g_{\varepsilon}(t)\big)\geq \gamma (c)\big\}\subset (t_{\varepsilon}-\varepsilon,t_{\varepsilon}+\varepsilon).
\end{equation*}
Now, if $E\big(g_{\varepsilon}(t)\big)\geq \gamma (c)$, then \eqref{eq3} implies that $|E\big(g_{\varepsilon}(t)\big)-\gamma (c)|\leq \varepsilon$ and for $\varepsilon<\tilde{\mu}/2$ we get that $|E\big(g_{\varepsilon}(t)\big)-\gamma (c)|<\tilde{\mu}/2$.

Moreover, for $\varepsilon$ small enough, we get that $g_\varepsilon(t)\in L$, where $L$ is given in \eqref{L}. Thus $g_\varepsilon(t)\in B(0,2R_0)$.

Finally, from \eqref{eq2} and the fact that $\sqrt{\varepsilon}$ will dominate $\varepsilon$ for sufficiently small $\varepsilon$, we get that $(\theta_2-\theta_1)\leq \mathrm C/\sqrt{\varepsilon}$ for some positive constant $\mathrm C$. Let $v\in C_0^\infty(\R^3;\C)$ such that $\lnorm u-v\rnorm_{H^1}\leq\lambda_2\,\tilde{\mu}/4$. Since $v$ and its derivatives are Lipschitz continuous, there exists a constant $\mathrm C_{\tilde{\mu}}>0$ such that
\begin{equation*}
 \lnorm v^{t_1}-v^{t_2}\rnorm_{H^1}\leq \mathrm C_{\tilde{\mu}}\,|t_1-t_2|, \text{ for all } t_1,t_2\in[0,1].
\end{equation*}
Due to the fact that the $L^2$-norm is invariant with respect to the scaling \eqref{Cazenave1}, that the $L^2$-norm of the gradient rescales 1-homogeneously (see \eqref{Cazenave2}) and that $m(t_\varepsilon)=1$, we estimate
\begin{align*}
 \dist(g_\varepsilon(t),V(c))\leq{}& \lnorm g_\varepsilon(t)-g_\varepsilon(t_\varepsilon)\rnorm_{H^1}=\lnorm u^{m(t)}-u^{m(t_\varepsilon)}\rnorm_{H^1}\\
 \leq {}&\lnorm u^{m(t)}-v^{m(t)}\rnorm_{H^1}+\lnorm v^{m(t)}-v^{m(t_\varepsilon)}\rnorm_{H^1}+\lnorm u^{m(t_\varepsilon)}-v^{m(t_\varepsilon)}\rnorm_{H^1}\\
 \leq{}& 2\max\{1,m(t)\} \lnorm u-v\rnorm_{H^1}+\mathrm C_{\tilde{\mu}}\,|t-t_\varepsilon|\,|\lambda_2-\lambda_1|\\
 \leq {}& \frac{\tilde{\mu}}{2}+\mathrm C\,\mathrm C_{\tilde{\mu}}\,\sqrt{\varepsilon}\leq \tilde{\mu}
\end{align*}
for $\displaystyle\varepsilon\leq \frac{1}{4}\Big(\frac{\tilde{\mu}}{\mathrm C\,\mathrm C_{\tilde{\mu}}}\Big)^2$. All in all, we have shown \eqref{varepsilon2}.
\end{proof}

From Lemma \ref{ps1} we know that one can find a sequence $\{u_\mu\}_{\mu>0}$, which is bounded and $u_\mu$ is in $J_\mu$ for all $\mu>0$. Taking a sequence $\mu_{n}\to 0$ for $n\to\infty$, we obtain immediately a bounded Palais-Smale sequence. We make this precise by the following proposition:
\begin{lemma}\label{ps2}
Let $c>0$ and $\lambda_3<0$. Then there exists a $H^1$-bounded $Q$-vanishing Palais-Smale sequence $\{u_n\}_{n\in N}$ in $S(c)$, whose weak limit is nonzero.
\end{lemma}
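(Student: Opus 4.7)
The plan is to apply Lemma~\ref{ps1} with a sequence $\mu_n\searrow 0$ and then use the translation invariance of $E$ together with Lions' concentration-compactness lemma to ensure the weak limit is nonzero. Pick $u_n\in J_{\mu_n}\cap B(0,3R_0)$, which is possible for every $n$ by Lemma~\ref{ps1}. By construction $\{u_n\}\subset S(c)$ is $H^1$-bounded, $E(u_n)\to\gamma(c)$, and $\|E|_{S(c)}'(u_n)\|_{T^*S(c)}\to 0$. Moreover, $\dist(u_n,V(c))\leq 2\mu_n\to 0$, so for each $n$ we can pick $v_n\in V(c)$ with $\|u_n-v_n\|_{H^1}\leq 3\mu_n$. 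Since $Q:H^1(\R^3;\C)\to\R$ is continuous and $\{u_n\},\{v_n\}$ stay in a bounded set of $H^1$, the Gagliardo-Nirenberg bound on $|B|$ and $|C|$ gives $|Q(u_n)-Q(v_n)|\to 0$, so $Q(u_n)\to Q(v_n)=0$. This shows $\{u_n\}$ is a bounded $Q$-vanishing Palais-Smale sequence at level $\gamma(c)$.

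It remains to produce a subsequence (after translation) with nonzero weak limit. Suppose the vanishing alternative holds, i.e.\ for every $R>0$,
\begin{equation*}
\lim_{n\to\infty}\sup_{y\in\R^3}\int_{B(y,R)}|u_n|^2\,dx=0.
\end{equation*}
By the classical Lions lemma (applied to the bounded sequence $\{u_n\}\subset H^1(\R^3;\C)$), this forces $u_n\to 0$ strongly in $L^p(\R^3;\C)$ for every $p\in(2,6)$. In particular $\|u_n\|_4\to 0$ and $\|u_n\|_5\to 0$, so by \eqref{B_estimate} we have $B(u_n)\to 0$, and $C(u_n)=\lambda_3\|u_n\|_5^5\to 0$. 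Combining with $Q(u_n)\to 0$ yields $A(u_n)\to 0$, hence
\begin{equation*}
E(u_n)=\tfrac{1}{2}A(u_n)+\tfrac{1}{2}B(u_n)+\tfrac{2}{5}C(u_n)\longrightarrow 0,
\end{equation*}
contradicting $E(u_n)\to\gamma(c)>0$ from Proposition~\ref{MPG}.

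Therefore the nonvanishing alternative holds: there exist $\delta>0$, $R>0$ and $y_n\in\R^3$ such that $\int_{B(y_n,R)}|u_n|^2\,dx\geq\delta$ for all $n$ large enough. Since $E$, $A$, $B$, $C$, $Q$ and $\|\cdot\|_2$ are all invariant under the translations $u\mapsto u(\cdot+y_n)$, and since the tangent bundle identification $T^*S(c)$ is translation-equivariant, the shifted sequence $\tilde u_n\defeq u_n(\cdot+y_n)$ is still an $H^1$-bounded $Q$-vanishing Palais-Smale sequence in $S(c)$. After extracting a subsequence, $\tilde u_n\weakly \tilde u$ in $H^1$ and $\tilde u_n\strongly\tilde u$ in $L^2_{\mathrm{loc}}$, and the lower bound on $B(0,R)$ forces $\|\tilde u\|_{L^2(B(0,R))}^2\geq\delta>0$, so $\tilde u\neq 0$. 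Relabelling $\tilde u_n$ as $u_n$ gives the required sequence.

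The main obstacle is ruling out vanishing; the key point is that $Q(u_n)\to 0$ rigidifies $A(u_n)$ in terms of $B(u_n)$ and $C(u_n)$, which are sub-critical quantities and hence would vanish under the vanishing hypothesis, collapsing the energy to zero. Once this is in hand, translation invariance does the rest.
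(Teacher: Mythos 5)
Your argument is correct and is essentially the proof the paper delegates to \cite[Lemma 3.2]{Bellazzini2013}: extract $u_n\in J_{\mu_n}\cap B(0,3R_0)$ from Lemma \ref{ps1}, transfer $Q(v_n)=0$ to $Q(u_n)\to 0$ by local Lipschitz continuity of $Q$ on bounded sets, rule out vanishing via Lions' lemma (since vanishing would kill $B$ and $C$, hence $A$ via $Q\to 0$, hence $E$, contradicting $\gamma(c)>0$), and translate to get a nonzero weak limit. You have simply written out the details that the paper leaves to the citation.
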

\begin{proof}
It is identical to the proofs of \cite[Lemma 3.2]{Bellazzini2013}.
\end{proof}

Together with a truncation argument and the $pqr$-Lemma (see \cite{lieb1986}) we can prove compactness and exclude vanishing:
\begin{proposition}[Compactness of the $Q$-vanishing Palais-Smale sequence]\label{ps3}
Let $c>0$ and $\lambda_3<0$. Let $\{u_n\}_{n\in\N}\subset S(c)$ be the bounded Palais-Smale sequence constructed in Lemma \ref{ps2}. Then there exist $0\neq u\in H^1(\R^3,\C)$, $\beta\in\mathbb R$, a (not relabeled) subsequence $\{u_n\}_{n\in\N}$ and a sequence $\{\beta_n\}\subset\R$ such that:
\begin{enumerate}
\item $u_n\weakly u$ in $H^1(\R^3;\C)$.
\item $\displaystyle \beta_n\strongly\beta$ in $\R$.
\item $-\frac{1}{2}\Delta u_n+\beta_n\, u_n +\lambda_1|u_n|^2\,u_n+\lambda_2 (K*|u_n|^2)\,u_n+\lambda_3|u_n|^3\,u_n\strongly 0$ in $H^{-1}(\R^3;\C)$.
\item $-\frac{1}{2}\Delta u_n+\beta\, u_n +\lambda_1|u_n|^2\,u_n+\lambda_2 (K*|u_n|^2)\,u_n+\lambda_3|u_n|^3\,u_n\strongly 0$ in $H^{-1}(\R^3;\C)$.
\item $-\frac{1}{2}\Delta u+\beta\, u +\lambda_1|u|^2\,u+\lambda_2 (K*|u|^2)\,u+\lambda_3|u|^3\,u= 0$ in $H^{-1}(\R^3;\C)$.
\end{enumerate}
\end{proposition}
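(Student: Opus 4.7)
The plan proceeds in three stages: produce a bounded Lagrange-multiplier sequence $\{\beta_n\}$, exclude vanishing of $\{u_n\}$ in $L^2$ using the $Q$-vanishing hypothesis, and pass to the weak limit in the Euler--Lagrange equation. The $H^1$-boundedness of $\{u_n\}$ from Lemma \ref{ps2} allows extraction of a weakly convergent subsequence $u_n\weakly u$. Since $\|u_n\|_2^2=c>0$, the gradient $2u_n$ of the constraint functional $\|\cdot\|_2^2$ is bounded below in $L^2$, so the standard Lagrange-multiplier argument applied to $\|E'|_{S(c)}(u_n)\|_{T^*S(c)}=o(1)$ yields $\beta_n\in\R$ satisfying
\begin{equation*}
-\tfrac{1}{2}\Delta u_n+\beta_n u_n+\lambda_1|u_n|^2u_n+\lambda_2(K*|u_n|^2)u_n+\lambda_3|u_n|^3u_n=o(1)\ \text{in}\ H^{-1}(\R^3;\C),
\end{equation*}
which is assertion 3. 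Pairing this identity with $u_n$ gives $\tfrac{1}{2}A(u_n)+\beta_n c+B(u_n)+C(u_n)=o(1)$; by $H^1$-boundedness, Gagliardo--Nirenberg, and $|\lambda_1+\lambda_2\widehat K(\xi)|\le(2\pi)^3\Xi$, the quantities $A(u_n), B(u_n), C(u_n)$ are uniformly bounded, hence so is $\{\beta_n\}$. Extracting a further subsequence yields $\beta_n\to\beta\in\R$, and assertion 4 follows from 3 because $(\beta_n-\beta)u_n\to 0$ in $L^2\hookrightarrow H^{-1}$.

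For the non-vanishing of the weak limit, combining $Q(u_n)\to 0$ with $E(u_n)\to\gamma(c)>0$ gives
\begin{equation*}
\tfrac{1}{2}A(u_n)-\tfrac{3}{5}C(u_n)=3E(u_n)-Q(u_n)\longrightarrow 3\gamma(c)>0,
\end{equation*}
so $\liminf_{n\to\infty}\big(\|\nabla u_n\|_2^2+|\lambda_3|\,\|u_n\|_5^5\big)>0$. Together with the mass bound $\|u_n\|_2^2=c$, an $L^\infty$-truncation followed by Lieb's $pqr$-Lemma (cf.~\cite{lieb1986}) produces $\eta,\delta>0$ and translations $y_n\in\R^3$ with $|\{x\in B(y_n,1):|u_n(x)|>\eta\}|\ge\delta$ for all $n$. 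The translates $u_n(\cdot-y_n)$ are $H^1$-bounded, so along a subsequence they converge weakly in $H^1$ and strongly in $L^2_{\mathrm{loc}}$ to some $u\neq 0$. Since $E$, $Q$ and the equation in assertion 4 are translation invariant, we relabel and henceforth assume $u_n\weakly u\neq 0$ for the original sequence.

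To obtain assertion 5 we pass to the weak limit in assertion 4 tested against $\varphi\in C_c^\infty(\R^3;\C)$. The linear terms converge by weak convergence in $H^1$. Rellich--Kondrachov gives $u_n\to u$ strongly in $L^p(\supp\varphi)$ for $p\in[2,6)$, so the local nonlinearities $|u_n|^2u_n$ and $|u_n|^3u_n$ converge in $L^1_{\mathrm{loc}}$ to $|u|^2u$ and $|u|^3u$. Since $\widehat K\in L^\infty$, the convolution $v\mapsto K*v$ is bounded on $L^2$, so $K*|u_n|^2\weakly K*|u|^2$ in $L^2$, and combined with the strong $L^4_{\mathrm{loc}}$-convergence of $u_n$, this handles the nonlocal term. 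The main obstacle is the non-vanishing step: without $Q(u_n)\to 0$ the Palais--Smale sequence could disperse to zero everywhere, leaving a trivial weak limit. The $Q$-vanishing combined with $\gamma(c)>0$ is precisely what forces the lower bound on $\|\nabla u_n\|_2^2+\|u_n\|_5^5$, which the $pqr$-Lemma then converts into non-vanishing on unit balls, enabling the translation argument.
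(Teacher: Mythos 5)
Your overall route---recovering the multipliers $\beta_n$ from the constrained Palais--Smale condition, proving their boundedness by pairing the equation with $u_n$, excluding vanishing via truncation and Lieb's $pqr$-Lemma, translating, and passing to the weak limit in the equation tested against $C_c^\infty$ functions---is exactly the strategy the paper intends, since it simply defers to \cite[Lemma 4.1, Proposition 4.1]{Bellazzini2013}. The recovery of assertions 3 and 4, the boundedness of $\{\beta_n\}$, and the limit passage in the local and nonlocal nonlinearities are all fine.

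There is, however, a genuine gap in the non-vanishing step. From $3E(u_n)-Q(u_n)=\tfrac12 A(u_n)-\tfrac35 C(u_n)\strongly 3\gamma(c)>0$ you deduce a lower bound on $\|\nabla u_n\|_2^2+|\lambda_3|\,\|u_n\|_5^5$ and then invoke the $pqr$-Lemma. But that lemma requires a lower bound on an $L^q$-norm with $q$ strictly between two exponents in which you control the sequence from above; a lower bound on the gradient is of no use to it. Your bound is compatible with $\|u_n\|_5\strongly 0$ while all the positivity is carried by $A(u_n)$ (modulated spreading profiles such as $n^{-3/2}\phi(x/n)e^{ix\cdot\xi_0}$ keep $\|u\|_2$ and $\|\nabla u\|_2$ bounded below yet vanish in every $L^q$ with $q>2$), and in that regime no truncation or $pqr$ argument can produce a nontrivial limit. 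The missing step is a second use of $Q(u_n)=o(1)$: if $\|u_n\|_5\strongly 0$, then interpolation against the fixed mass $\|u_n\|_2^2=c$ gives $\|u_n\|_4\strongly 0$, hence $B(u_n)\strongly 0$ by \eqref{B_estimate} and $C(u_n)\strongly 0$; the relation $Q(u_n)=A(u_n)+\tfrac32 B(u_n)+\tfrac95 C(u_n)=o(1)$ then forces $A(u_n)\strongly 0$ and so $E(u_n)\strongly 0$, contradicting $E(u_n)\strongly\gamma(c)>0$. This yields $\liminf_{n\to\infty}\|u_n\|_5>0$, and only then does the $pqr$-Lemma (with exponents $2<5<6$, the $L^6$ bound coming from Sobolev embedding) supply the $\eta,\delta>0$ and the translations $y_n$. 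With this insertion the remainder of your argument goes through.
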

\begin{proof}
We refer the proof to \cite[Lemma 4.1, Proposition 4.1]{Bellazzini2013}.
\end{proof}
\begin{remark}\label{u is solution}
In particular we see that $(u,\beta)$ is a solution of \eqref{solution}, thus $Q(u)=0$ due to Lemma \ref{betaneq0} below.
\end{remark}

\section{Pohozaev identities and positivity of \texorpdfstring{$\beta$}{$\beta$}}
In the following we show a Pohozaev identity result, from which we obtain that the chemical potential $\beta$ is positive for sufficiently small $c$. The positivity of $\beta$ is essential for proving that the limit solution $u$ given by Proposition \ref{ps3} will lie in the set $S(c)$, see the proof of Theorem \ref{Theorem1} below.
\begin{lemma}\label{betaneq0}
Let $\lambda_3<0$ and $(u,\beta)\in H^1(\R^3;\C)\times \R$ be a solution of \eqref{solution}. Then $Q(u)=0$. Moreover, there exists some $c_0>0$, depending only on $\lambda_1,\lambda_2,\lambda_3$, such that for all $c\in (0,c_0)$, if $u\in S(c)$, then $\beta>0$. Moreover, if either
$$\lambda_2>0\ \text{ and }\ \lambda_1+\frac{8\pi}{3}\lambda_2\leq 0$$
or
$$\lambda_2\leq 0\ \text{ and }\ \lambda_1-\frac{4\pi}{3}\lambda_2\leq 0$$
is satisfied, then $c_0=\infty$.
\end{lemma}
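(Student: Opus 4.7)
The plan is to prove the Pohozaev identity $Q(u)=0$ first, then couple it with the $L^2$-pairing of \eqref{solution} to isolate $\beta c$, and finally combine Gagliardo--Nirenberg with Lemma \refpart{infinity}{3} to secure positivity of $\beta$ at small mass.

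\textbf{Pohozaev identity.} Since $(u,\beta)$ solves \eqref{solution}, $u$ is an unconstrained critical point of $F(v)\defeq E(v)+\tfrac{\beta}{2}\|v\|_2^2$. The mass-preserving scaling $u^t(x)=t^{3/2}u(tx)$ leaves the $\beta$-term of $F$ invariant, so differentiating $t\mapsto F(u^t)$ at $t=1$ formally yields $Q(u)=0$ by \eqref{Q(u^t)}. A rigorous derivation requires either a density argument or directly multiplying \eqref{solution} by $x\cdot\nabla\bar u$, taking real parts, and integrating by parts; the delicate nonlocal term is handled on the Fourier side, where the $0$-homogeneity of $\widehat K$ delivers exactly $\tfrac{3}{2}B(u)$ with no boundary residue. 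This is the ``nonlocal Pohozaev identity with no rest term'' advertised in the abstract and constitutes the main technical step.

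\textbf{Formula for $\beta c$ and unconditional cases.} Pairing \eqref{solution} with $\bar u$ and taking real parts gives $\tfrac{1}{2}A(u)+\beta c+B(u)+C(u)=0$. Substituting $C(u)=-\tfrac{5}{9}A(u)-\tfrac{5}{6}B(u)$ from $Q(u)=0$ yields the key identity
\begin{equation*}
\beta c=\tfrac{1}{18}A(u)-\tfrac{1}{6}B(u).
\end{equation*}
Under either hypothesis on $(\lambda_1,\lambda_2)$ in the statement, \eqref{rangeKhat} forces $\lambda_1+\lambda_2\widehat K(\xi)\leq 0$ pointwise, hence $B(u)\leq 0$ for every $u$; the displayed identity then gives $\beta c\geq\tfrac{1}{18}A(u)>0$ for every $c>0$, yielding $c_0=\infty$.

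\textbf{Small-mass case.} If $B(u)\leq 0$ we conclude as above. Otherwise, assume for contradiction that $A(u)\leq 3B(u)$ (i.e.~$\beta c\leq 0$). Combined with $Q(u)=0$ this yields $|\lambda_3|\,\|u\|_5^5\leq\tfrac{5}{2}B(u)$, and \eqref{B_estimate} together with the Hölder interpolation $\|u\|_4^4\leq c^{1/3}\|u\|_5^{10/3}$ between $L^2$ and $L^5$ produces $\|u\|_5^5\lesssim c$, hence $A(u)\lesssim c$ with constants depending only on $\lambda_1,\lambda_2,\lambda_3$. On the other hand, the contrapositive of \refpart{infinity}{3} paired with the explicit estimate \eqref{gagliardo-nirenberg and plancherel} forces $A(u)\gtrsim c^{-1/5}$ as $c\to 0$. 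The two bounds $A(u)\lesssim c$ and $A(u)\gtrsim c^{-1/5}$ become incompatible below an explicit threshold $c_0=c_0(\lambda_1,\lambda_2,\lambda_3)$, producing the desired contradiction and thus $\beta>0$ on $(0,c_0)$.

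The main obstacle is the Pohozaev identity with no rest term, since the convolution against the singular dipolar kernel $K$ is the source of most technical difficulties; once this is in place, the sign analysis reduces to the algebraic identity $\beta c=\tfrac{1}{18}A(u)-\tfrac{1}{6}B(u)$ and a careful matching of $c$-powers between the $L^4$- and $L^5$-Gagliardo--Nirenberg bounds, with the subcritical gap providing the threshold $c_0$.
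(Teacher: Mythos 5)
Your proposal is correct and, for the core of the lemma, follows the paper's own route: the Pohozaev identity is obtained exactly as in the paper (testing with $\bar u$ and $x\cdot\nabla\bar u$, passing to the Fourier side for the dipolar term, and using the $0$-homogeneity of $\widehat K$ to kill the rest term, cf.\ \eqref{eq11}--\eqref{eq12}), and your key identity $\beta c=\tfrac{1}{18}A(u)-\tfrac{1}{6}B(u)$ is precisely the paper's \eqref{eq22}, obtained by eliminating $C(u)$ from the mass and Pohozaev identities; the unconditional cases ($B(u)\le 0$ forced by the sign hypotheses via \eqref{rangeKhat}) are handled identically. Where you genuinely diverge is the small-mass case with $B(u)>0$: the paper splits on $\|u\|_4\ge 1$ versus $\|u\|_4<1$, using the elimination of $A(u)$ (giving $2\beta c=-\tfrac12 B-\tfrac15 C$) in the first regime and the crude bound $\|u\|_4^4\le 1$ in the second, whereas you run a single contradiction argument: assuming $A\le 3B$, the interpolation $\|u\|_4^4\le c^{1/3}\|u\|_5^{10/3}$ combined with $Q(u)=0$ yields $A(u)\lesssim c$, which clashes with the lower bound $A(u)\gtrsim c^{-1/5}$ that $Q(u)=0$ forces via \eqref{gagliardo-nirenberg and plancherel}. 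Both mechanisms produce an explicit $c_0$ depending only on $\Xi$ and $\lambda_3$; your version avoids the somewhat artificial $\|u\|_4\gtrless 1$ dichotomy and uses only one elimination identity throughout, at the small cost of having to note that the two alternatives in the lower bound for $A(u)$ (namely $A\gtrsim c^{-1}$ or $A\gtrsim c^{-1/5}$) both dominate $c^{-1/5}$ for small $c$.
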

\begin{proof}
Testing \eqref{solution} with $\bar{u}$ and $x\cdot \nabla \bar{u}$ and integrate by parts over $\R^3$ respectively (we point out that in order to ensure integration by parts, we should first truncate the test functions on bounded smooth domains on $\R^3$, then increase the radius of the domain to infinity. Since these arguments are standard and classical, we refer to \cite{Cazenave2003} for details) we deduce that
\begin{align}
&\frac{1}{2}A(u)+B(u)+C(u)+\beta\|u\|^2_{2}=0,\label{eq1}\\
&\frac{1}{4}A(u)+\frac{3}{4}B(u)+\frac{3}{5}C(u)+\frac{3}{2}\beta \|u\|^2_2=\frac{1}{4}\frac{\lambda_2}{(2\pi)^3}\int_{\R^3}\Big(\sum_{j=1}^3\xi_j\,\partial_j \widehat{W}(\xi)\Big)\,\big|\widehat{|u^2|}(\xi)\big|^2\;d\xi,\label{eq11}
\end{align}
where the right-hand side rest term of \eqref{eq11} comes from \cite[Proposition 5.3]{DeLaire2012}. But a direct calculation yields that $\sum_{j=1}^3\xi_j\,\partial_j \widehat{W}(\xi)=0$. Thus
\begin{equation}\label{eq12}
\frac{1}{4}A(u)+\frac{3}{4}B(u)+\frac{3}{5}C(u)+\frac{3}{2}\beta \|u\|^2_2=0.
\end{equation}
Eliminating $\|u\|^2_2$ from \eqref{eq1} and \eqref{eq12} we get that
\begin{equation}\label{qu=0}
Q(u)=A(u)+\frac{3}{2}B(u)+\frac{9}{5}C(u)=0.
\end{equation}
If is left to show the existence of $c_0$ with the above mentioned properties. From H\"older's inequality we obtain that
\begin{align}\label{hoelder}
\|u\|_4\leq \|u\|_5^{5/6}\|u\|_2^{1/6}.
\end{align}
We then discuss two cases: $\|u\|_4\geq 1$ and $\|u\|_4<1$. For the first case, we obtain from \eqref{hoelder} that
\begin{align*}
\|u\|_4^4\leq \|u\|_4^6\leq\|u\|^5_5\|u\|_2=c^{1/2}\,\|u\|_5^5.
\end{align*}
Thus
\begin{align}
\frac{1}{2}B(u)+\frac{1}{5}C(u)&\leq \frac{\Xi}{2}\,\|u\|_4^4+\frac{\lambda_3}{5}\,\|u\|_5^5\nonumber\\
&\leq\big(\frac{c^{1/2}\Xi}{2}+\frac{\lambda_3}{5}\big)\,\|u\|_5^5\label{c_estimate_1}.
\end{align}
Letting $c\in(0,\frac{4\lambda_3^2}{25\Xi^2})$, we see that the last term of \eqref{c_estimate_1} is negative. Now eliminating $A(u)$ from \eqref{eq1} and \eqref{eq12}, we obtain that
\begin{align}\label{bu+cu}
2\beta\|u\|_2^2=-\big(\frac{1}{2}B(u)+\frac{1}{5}C(u)\big)>0
\end{align}
for $c\in(0,\frac{4\lambda_3^2}{25\Xi^2})$, which implies that $\beta>0$. Now we consider the case $\|u\|_4<1$. We discuss two cases: $B(u)\leq 0$ and $B(u)>0$. For the first case,  we obtain directly from \eqref{bu+cu} that $\beta>0$ for all $c$ in $(0,\infty)$. Thus we assume that $B(u)>0$ in the following. Recall the Gagliardo-Nirenberg inequalities
\begin{align*}
-C(u)&\leq -\lambda_3 \mathrm C_1\,\|\nabla u\|_2^{9/2}\,\|u\|_2^{1/2}=-\lambda_3\mathrm C_1\, \,c^{1/4}\, A(u)^{9/4},
\end{align*}
where $\mathrm C_1$ is the Gagliardo-Nirenberg constant depending only on space dimension. Thus we obtain from \eqref{qu=0} that
\begin{align*}
A(u)+\frac{9}{5}\lambda_3\mathrm C_1c^{1/4}A(u)^{9/4}\leq A(u)+\frac{9}{5}C(u)=-\frac{3}{2}B(u)<0,
\end{align*}
which implies that
\begin{align}\label{eq21}
A(u)\geq \mathrm C_2(-\lambda_3)^{-4/5}c^{-1/5},
\end{align}
where $\mathrm C_2:=(\frac{5}{9})^{4/5}\mathrm C_1^{-4/5}$. On the other hand, eliminating $C(u)$ from \eqref{eq1} and \eqref{eq12}, we obtain that
\begin{align}\label{eq22}
18\beta\|u\|_2^2=A(u)-3B(u)\geq A(u)-3\Xi\|u\|_4^4\geq A(u)-3\Xi,
\end{align}
since $\|u\|_4\leq 1$. Thus letting
$$ \mathrm C_2(-\lambda_3)^{-4/5}c^{-1/5}> 3\Xi\Leftrightarrow c<\frac{\mathrm C_2^{5}\lambda_3^{-4}}{243}\Xi^{-5},$$
we conclude from \eqref{eq21} and \eqref{eq22} that $\beta>0$. Then $c_0:=\min\{\frac{4\lambda_3^2}{25\Xi^2},\frac{\mathrm C_2^{5}\lambda_3^{-4}}{243}\Xi^{-5}\}$ satisfies the assumptions of the lemma. Now if either
$$\lambda_2>0\ \text{ and } \ \lambda_1+\frac{8\pi}{3}\lambda_2\leq 0$$
or
$$\lambda_2\leq 0\ \text{ and }\ \lambda_1-\frac{4\pi}{3}\lambda_2\leq 0$$
is satisfied, we obtain that $B(u)\leq 0$ for all $u\in S(c)$, thus from the previous proof we immediately see that $c_0=\infty$. This completes the proof of the claim.
\end{proof}

\section{Proof of Theorem \ref{Theorem1}}
Before we finally prove the Theorem \ref{Theorem1} we still need a couple of technical tools.
\begin{lemma}\label{continuity}
Let $a>0,\ b\in\R,\ c< 0$ and
\begin{equation*}
f(a,b,c)\defeq \max_{t>0}\{at^2+bt^3+ct^{9/2}\}
\end{equation*}
Then $f$ is continuous in $(0,\infty)\times \R\times (-\infty,0)$.
\end{lemma}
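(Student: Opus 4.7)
The plan is to prove continuity of $f$ by reducing it to continuity of a maximum over a compact $t$-interval that is locally uniform in $(a,b,c)$, and then invoking joint continuity on this compact set.

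\textbf{Existence and positivity of the maximum.} I would first observe that for each admissible triple $(a,b,c)$, the function $g_{(a,b,c)}(t)\defeq at^2+bt^3+ct^{9/2}$ behaves like $at^2>0$ as $t\to 0^+$ and tends to $-\infty$ as $t\to\infty$ because $c<0$. Hence $g_{(a,b,c)}$ attains a strictly positive maximum at some interior point, so $f(a,b,c)$ is finite and positive.

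\textbf{Local uniform localization of maximizers.} Fix $(a_0,b_0,c_0)$ with $a_0>0$, $c_0<0$. I would pick a neighborhood $U$ of this point so that for every $(a,b,c)\in U$ one has $a\leq|a_0|+1$, $|b|\leq|b_0|+1$, and $c\leq c_0/2<0$. Under these uniform bounds,
\[
g_{(a,b,c)}(t)\;\leq\;(|a_0|+1)t^2+(|b_0|+1)t^3+\tfrac{c_0}{2}t^{9/2}\eqdef h(t),
\]
and since $h(t)\to-\infty$ as $t\to\infty$, there exists $T>0$ with $h(t)<0$ on $(T,\infty)$, so maximizers cannot lie in $(T,\infty)$ for any $(a,b,c)\in U$. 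For the lower bound, I would fix a single $t_0>0$ with $g_{(a_0,b_0,c_0)}(t_0)>0$; by pointwise continuity, shrinking $U$ if necessary, $f(a,b,c)\geq g_{(a,b,c)}(t_0)\geq \tfrac12 g_{(a_0,b_0,c_0)}(t_0)>0$ throughout $U$. On the other hand, the uniform estimate $|g_{(a,b,c)}(t)|\leq\mathrm M(t^2+t^3+t^{9/2})$ (with $\mathrm M$ depending only on $U$) shows that $g_{(a,b,c)}(t)\to 0$ as $t\to 0$ uniformly in $(a,b,c)\in U$, so one can choose $\delta>0$ small enough that $g_{(a,b,c)}(t)<\tfrac14 g_{(a_0,b_0,c_0)}(t_0)$ for every $t\in(0,\delta]$ and every $(a,b,c)\in U$. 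Combining, every maximizer lies in $[\delta,T]$, uniformly in $U$, and therefore
\[
f(a,b,c)=\max_{t\in[\delta,T]}\bigl(at^2+bt^3+ct^{9/2}\bigr)\quad\text{for all }(a,b,c)\in U.
\]

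\textbf{Conclusion via continuity on a fixed compact set.} Since the map $((a,b,c),t)\mapsto at^2+bt^3+ct^{9/2}$ is jointly continuous and $[\delta,T]$ is a fixed compact interval, the right-hand side above is continuous in $(a,b,c)$ (standard uniform-continuity argument, or a special case of Berge's maximum theorem with a constant correspondence). This yields continuity of $f$ at the arbitrary point $(a_0,b_0,c_0)$.

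\textbf{Anticipated main obstacle.} The delicate step is the \emph{lower} bound $\delta>0$ on the location of maximizers: a priori, as $(a,b,c)$ moves, the maximizer of $g_{(a,b,c)}$ could conceivably drift to $0$, collapsing the compact $t$-interval. Ruling this out is precisely what the positive uniform lower bound $f(a,b,c)\geq\tfrac12 g_{(a_0,b_0,c_0)}(t_0)>0$ achieves, and is the reason one must combine pointwise continuity (to get positivity of $f$ locally) with the uniform smallness estimate for $g_{(a,b,c)}$ near $t=0$. The $+\infty$ side is easier because $c<0$ stays strictly negative on $U$ and supplies coercivity in $t$.
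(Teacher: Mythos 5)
Your proof is correct, but it takes a genuinely different route from the paper. The paper's argument is structural: it recycles the sign analysis of $y'$ and $y''$ from Lemma \ref{monotoneproperty} to show that for each admissible $(a,b,c)$ the map $t\mapsto at^2+bt^3+ct^{9/2}$ has a \emph{unique} critical point $t_0=t_0(a,b,c)$, which is a nondegenerate maximum ($\partial_{tt}<0$ there), and then invokes the implicit function theorem (following \cite[Lemma 5.2]{Bellazzini2013}) to get continuity of $t_0$ in the parameters, hence of $f=g(\cdot,\cdot,\cdot,t_0)$. Your argument instead localizes the maximizers in a fixed compact interval $[\delta,T]$ uniformly over a neighborhood $U$ of the base point --- using coercivity from $c\leq c_0/2<0$ for the upper cutoff $T$, and the uniform positive lower bound $f\geq\tfrac12 g_{(a_0,b_0,c_0)}(t_0)$ together with the uniform smallness of $g$ near $t=0$ for the lower cutoff $\delta$ --- and then concludes by joint continuity of the max over a fixed compact set. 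What each buys: your route is more elementary and more robust, since it never needs uniqueness or nondegeneracy of the critical point and would survive any perturbation of the exponents preserving positivity near $0$ and coercivity at infinity; the paper's route requires the delicate second-derivative analysis but yields the stronger conclusion that the maximizer itself varies continuously (indeed as smoothly as the implicit function theorem allows), information your compactness argument does not provide. Since the paper only ever uses continuity of the \emph{value} $f$, your proof fully suffices for the application in Lemma \ref{nonincreasing}. You also correctly identified the one delicate point (preventing the maximizer from collapsing to $t=0$), which your uniform lower bound on $f$ handles cleanly.
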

\begin{proof}
Let $g(a,b,c,t)=at^2-bt^3-ct^{9/2}$. Then
\begin{align*}
\partial_t g(a,b,c,t)&=2at-3bt^2-\frac{9}{2}t^{7/2},\\
\partial_{tt}g(a,b,c,t)&=2a-6bt-\frac{63}{4}t^{5/2}.
\end{align*}
Setting $A(u)=a,\, B(u)=-b,\, C(u)=-c$, we deduce from the proof of Lemma \ref{monotoneproperty} that for each $a_0>0,b_0\in\R,c_0<0$, there exists a unique $t_0>0$ on such that $\partial_t g(a_0,b_0,c_0,t_0)=0$ and $\partial_{tt}g(a_0,b_0,c_0,t_0)<0$, which implies that $f(a_0,b_0,c_0)=g(a_0,b_0,c_0,t_0)$. Then using the implicit function theorem argument as in \cite[Lemma 5.2]{Bellazzini2013}, we obtain the result.
\end{proof}
\begin{lemma}\label{nonincreasing}
The function $c\mapsto \gamma(c)$ is non increasing for $c>0$.
\begin{proof}
Let $0<c_1<c_2$. To show the claim it suffices to show that for arbitrary $\varepsilon>0$ we have
\begin{equation*}
\gamma(c_2)\leq \gamma(c_1)+\varepsilon.
\end{equation*}
For any $u_1\in V(c_1)$, one obtains from Lemma \hyperref[monotoneproperty]{\ref*{monotoneproperty}, \textit{1.}} that
\begin{equation*}
E(u_1)=\max_{t>0}E(u_1^t).
\end{equation*}
Moreover, from Lemma \ref{infV(c)} we can find a $u_1\in V(c_1)$ such that $E(u_1)\leq \gamma(c_1)+\varepsilon/2$. Let $\eta\in C^{\infty}_0(\R^3)$ be a cut-off function with $\eta(x)=1$ for $|x|\leq 1$, $\eta(x)=0$ for $|x|\geq 2$ and $\eta\in [0,1]$ for $|x|\in(1,2)$. For $\delta>0$, define
\begin{equation*}
\tilde{u}_{1,\delta}(x)\defeq \eta(\delta x)\cdot u_1(x).
\end{equation*}
Then $\tilde{u}_{1,\delta}\strongly u_1$ in $H^1(\R^3;\C)$ as $\delta\strongly 0$. Therefore,
\begin{align*}
&A(\tilde{u}_{1,\delta})\strongly A(u_1),\notag \\
&B(\tilde{u}_{1,\delta})\strongly B(u_1),\notag \\
&C(\tilde{u}_{1,\delta})\strongly C(u_1)
\end{align*}
as $\delta\strongly 0$. Using the continuity property provided by Lemma \ref{continuity} we conclude that there exists sufficiently small $\delta>0$ such that
\begin{align*}
\max_{t>0}E(\tilde{u}_{1,\delta}^t)&=\max_{t>0}\Big\{\frac{t^2}{2}A(\tilde{u}_{1,\delta})+\frac{t^3}{2}B(\tilde{u}_{1,\delta})+\frac{2}{5}t^{9/2}C(\tilde{u}_{1,\delta})\Big\}\notag \\
&\leq\max_{t>0}\Big\{\frac{t^2}{2}A(u_1)+\frac{t^3}{2}B(u_1)+\frac{2}{5}t^{9/2}C(u_1)\Big\}+\frac{\varepsilon}{4}\notag \\
&=\max_{t>0} E(u_1^t)+\frac{\varepsilon}{4}.
\end{align*}
Now let $v\in C_0^\infty(\R^3)$ with $\supp(v)\subset\R^3\backslash B(0,2/\delta)$ and define
\begin{equation*}
v_0\defeq \frac{c_2-\|\tilde{u}_{1,\delta}\|_2^2}{\|v\|_2^2}\,v.
\end{equation*}
We have $\|v_0\|_2^2=c_2-\|\tilde{u}_{1,\delta}\|_2^2$ and for $\lambda\in(0,1)$,
\begin{align*}
A(v^\lambda_0)&=\lambda^2A(v_0),\notag \\
B(v^\lambda_0)&=\lambda^3B(v_0),\notag \\
C(v^\lambda_0)&=\lambda^{9/2}C(v_0).
\end{align*}
Let $w_{\lambda}\defeq \tilde{u}_{1,\delta}+v^\lambda_0$. For sufficiently small $\delta$ we get that $\supp \tilde u_{1,\delta}\cap \supp v_0^\lambda=\emptyset$. Thus
\begin{align*}
\lnorm w_{\lambda}\rnorm_2&= \lnorm\tilde{u}_{1,\delta}\rnorm_2+\lnorm v^\lambda_0\rnorm_2,\\
A(w_{\lambda})&=A(\tilde{u}_{1,\delta})+A(v_0^\lambda),\notag \\
C(w_{\lambda})&=C(\tilde{u}_{1,\delta})+C(v_0^\lambda)
\end{align*}
and the first equation above implies that $w_\lambda\in S(c_2)$. From Plancherel's identity, H\"older's inequality and the boundedness of $\widehat{K}$ we also obtain
\begin{align*}
&\big|B(w_{\lambda})-B(\tilde{u}_{1,\delta})-B(v_0^\lambda)\big|\notag \\
&\qquad =\bigg|\frac{1}{2}\frac{1}{(2\pi)^3}\int_{\R^3}\big(\lambda_1+\widehat{K}(x)\big)\,\Big(\big|\mathcal{F}\big((\tilde{u}_{1,\delta}+v_0^\lambda)^2\big)\big|^2-\big|\mathcal{F}(\tilde{u}_{1,\delta}^2)\big|^2-\big|\mathcal{F}\big((v_0^\lambda)^2\big)\big|^2\Big)\;dx\bigg|\notag \\
&\qquad \leq \mathrm C\int_{\R^3}\Big(\big|\tilde{u}_{1,\delta}^2\,(v_0^\lambda)^2\big|+\big|\tilde{u}_{1,\delta}^3\,v_0^\lambda\big|+\big|\tilde{u}_{1,\delta}\,(v_0^\lambda)^3\big|\Big)\;dx\notag \\
&\qquad \leq \mathrm C\,\Big(\|\tilde{u}_{1,\delta}\|^2_4\|v_0^\lambda\|^2_4+\|\tilde{u}_{1,\delta}\|^3_{\frac{9}{2}}\|v_0^\lambda\|_3+\|\tilde{u}_{1,\delta}\|_3\|v_0^\lambda\|^3_{9/2}\Big)\notag \\
&\qquad =\mathrm C\,\Big(\lambda^{3/2}\|\tilde{u}_{1,\delta}\|^2_4\,\|v_0\|^2_4+\lambda^{1/2}\|\tilde{u}_{1,\delta}\|^3_{9/2}\,\|v_0\|_3+\lambda^{5/2}\|\tilde{u}_{1,\delta}\|_3\,\|v_0\|^3_{9/2}\Big)\strongly 0
\end{align*}
as $\lambda\strongly 0$.
To sum up,
\begin{align*}
A(w_\lambda)&\strongly A(\tilde{u}_{1,\delta}),\notag \\
B(w_\lambda)&\strongly B(\tilde{u}_{1,\delta}),\notag \\
C(w_\lambda)&\strongly C(\tilde{u}_{1,\delta})
\end{align*}
as $\lambda\strongly 0$. Again using Lemma \ref{continuity}, we get that
\begin{align*}
\max_{t>0}E(w_\lambda^t)\leq \max_{t>0}E(\tilde{u}_{1,\delta}^t)+\frac{\varepsilon}{4}
\end{align*}
for sufficiently small $\lambda>0$. Finally, we calculate
\begin{align*}
\gamma(c_2)&\leq \max_{t>0}E(w_\lambda^t)\leq \max_{t>0}E(\tilde{u}_{1,\delta}^t)+\frac{\varepsilon}{4}\leq \max_{t>0}E(u_1^t)+\frac{\varepsilon}{2}=E(u_1)+\frac{\varepsilon}{2}\leq \gamma(c_1)+\varepsilon,
\end{align*}
which completes the proof.
\end{proof}
\end{lemma}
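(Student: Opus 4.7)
The plan is to show, for any $0<c_1<c_2$ and any $\varepsilon>0$, that $\gamma(c_2)\leq \gamma(c_1)+\varepsilon$. By Lemma \ref{infV(c)} I may pick $u_1\in V(c_1)$ with $E(u_1)\leq \gamma(c_1)+\varepsilon/2$, and since $u_1\in V(c_1)$ Lemma \refpart{monotoneproperty}{5} gives $\max_{t>0}E(u_1^t)=E(u_1)$. The goal is then to produce a test function $w\in S(c_2)$ whose scaled energy satisfies $\max_{t>0}E(w^t)\leq E(u_1)+\varepsilon/2$; one can then take any path $w^{(1-t)t_1+tt_2}$ with $t_1$ small and $t_2$ large (Lemma \refpart{infinity}{1}) and deduce $\gamma(c_2)\leq \max_{t>0}E(w^t)\leq \gamma(c_1)+\varepsilon$.

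The test function will be produced by concentrating $u_1$ on a bounded region and adding a small bump far away to raise the mass from $c_1$ to $c_2$. Concretely, I would first truncate via a smooth cut-off $\eta$, setting $\tilde u_{1,\delta}(x)\defeq \eta(\delta x)\,u_1(x)$, which converges to $u_1$ in $H^1$ as $\delta\to 0$ and is supported in $B(0,2/\delta)$. Then I pick $v\in C_0^\infty(\mathbb R^3)$ with support disjoint from $\overline{B(0,2/\delta)}$, normalize to $v_0\defeq\big((c_2-\|\tilde u_{1,\delta}\|_2^2)/\|v\|_2^2\big)^{1/2}v$, and consider the scaled bump $v_0^\lambda$ (cf.\ \eqref{Cazenave1}). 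Setting $w_\lambda\defeq \tilde u_{1,\delta}+v_0^\lambda$, disjointness of supports guarantees $w_\lambda\in S(c_2)$.

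Because the two summands have disjoint supports, $A$ and $C$ are additive on the decomposition, and the scalings $A(v_0^\lambda)=\lambda^2 A(v_0)$, $C(v_0^\lambda)=\lambda^{9/2}C(v_0)$ drive those contributions to zero as $\lambda\to 0$. The key difficulty is the nonlocal term $B$: the convolution kernel has global reach, so $B(w_\lambda)$ does not split cleanly into $B(\tilde u_{1,\delta})+B(v_0^\lambda)$. I would control the cross terms via Parseval's identity and the uniform bound on $\widehat K$, reducing them to mixed integrals such as $\int \tilde u_{1,\delta}^2 (v_0^\lambda)^2$, $\int \tilde u_{1,\delta}^3 v_0^\lambda$, $\int \tilde u_{1,\delta}(v_0^\lambda)^3$, and then estimating these by H\"older together with the scaling $\|v_0^\lambda\|_p=\lambda^{3(1/2-1/p)}\|v_0\|_p$ so that each cross term vanishes as $\lambda\to 0$. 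This is the main obstacle and the only step that really uses the structural properties of $K$ beyond $\widehat K\in L^\infty$.

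The conclusion then follows by a two-step continuity argument: applying Lemma \ref{continuity} to the map $(a,b,c)\mapsto \max_{t>0}\{at^2+bt^3+ct^{9/2}\}$, the convergence $A(w_\lambda),B(w_\lambda),C(w_\lambda)\to A(\tilde u_{1,\delta}),B(\tilde u_{1,\delta}),C(\tilde u_{1,\delta})$ yields $\max_{t>0}E(w_\lambda^t)\leq \max_{t>0}E(\tilde u_{1,\delta}^t)+\varepsilon/4$ for small $\lambda$, and similarly $\max_{t>0}E(\tilde u_{1,\delta}^t)\leq \max_{t>0}E(u_1^t)+\varepsilon/4=E(u_1)+\varepsilon/4$ for small $\delta$. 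Chaining these estimates with $E(u_1)\leq \gamma(c_1)+\varepsilon/2$ gives $\gamma(c_2)\leq \gamma(c_1)+\varepsilon$, and letting $\varepsilon\to 0$ completes the proof.
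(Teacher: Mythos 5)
Your proposal is correct and follows essentially the same route as the paper: truncation by a cut-off, a far-away rescaled bump to restore the mass, additivity of $A$ and $C$ by disjoint supports, control of the nonlocal cross terms in $B$ via Plancherel and H\"older, and a two-step application of Lemma \ref{continuity}. (Your normalization $v_0=\big((c_2-\|\tilde u_{1,\delta}\|_2^2)/\|v\|_2^2\big)^{1/2}v$ is in fact the correct one; the paper omits the square root, which is a harmless typo.)
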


\begin{proof}[Proof of Theorem \protect\ref{Theorem1}]\label{proof of theorem 1}
\textit{1.} The geometry of the energy landscape is is proved by Proposition \ref{MPG}, the independence of $\gamma(c)$ on $K_c$ is proved by Lemma \ref{infV(c)} and the nonexistence of local minimizers in Proposition \ref{local minimizer}.

\textit{2.} We now show the second statement of Theorem \ref{Theorem1}. Let $c_0$ be the constant given by Lemma \ref{betaneq0}. Fix an arbitrary $c\in(0,c_0)$ and let $\{(u_n,\beta_n)\}_{n\in\N}$ and $(u,\beta)$ be given by Proposition \ref{ps3}. We show that $(u_c,\beta_c)\defeq(u,\beta)$ satisfies the assertion.

Using the splitting property given by \cite[Theorem 1]{BrezisLieb1983} we obtain that
\begin{align*}
B(u_n-u)+B(u)&=B(u_n)+o(1),\\
C(u_n-u)+C(u)&=C(u_n)+o(1),\\
D(u_n-u)+D(u)&=D(u_n)+o(1),
\end{align*}
where $D(v)\defeq \|v\|_2^2$. For $a,b$ elements of an arbitrary Hilbert space $H$, we have
\begin{align*}
( a-b,a-b)_H+( b,b)_H=( a,a)_H+( b,b-a)_H+( b-a,b)_H.
\end{align*}
If $a=b_n\weakly b$, it follows that
\begin{align*}
( b_n-b,b_n-b)_H+( b,b)_H=( b_n,b_n)_H+o(1).
\end{align*}
Thus we also have
\begin{align*}
A(u_n-u)+A(u)=A(u_n)+o(1).
\end{align*}
Since $E(u)=\frac{1}{2}A(u)+\frac{1}{2}B(u)+\frac{2}{5}C(u)$, we obtain that
\begin{equation*}
E(u_n-u)+E(u)=E(u_n)+o(1).
\end{equation*}
From Remark \ref{u is solution} we infer that $Q(u)=0$. From the lower semicontinuity of the $L^2$-norm we obtain that
$$\|u\|_2^2\leq \liminf_{n\to\infty}\|u_n\|_2^2=c. $$
Since $u\neq 0$, we infer that $u\in V(c_1)$ for some $c_1\in(0,c]$. It holds then $\gamma(c_1)=\inf_{u\in V(c_1)}E(u)$ from Lemma \ref{infV(c)}. We also have $\gamma(c)=E(u_n)+o(1)$ from Lemma \ref{ps2}. Hence
\begin{equation}\label{Elesso1}
E(u_n-u)+\gamma(c_1)\leq\gamma(c)+o(1).
\end{equation}
On the other hand, recall that
\begin{equation}\label{EminusQ}
-Q(v)+3E(v)=\frac{1}{2}A(v)-\frac{3}{5}C(v)
\end{equation}
for all $v\in H^1(\R^3;\C)$. Using $Q(u)=0$ and $Q(u_n)=o(1)$ from Lemma \ref{ps2} we obtain that
\begin{equation*}
Q(u_n-u)=Q(u_n-u)+Q(u)=Q(u_n)+o(1)=o(1).
\end{equation*}
Inserting this into \eqref{EminusQ}, we conclude that $E(u_n-u)\geq o(1)$, since the right-hand side of \eqref{EminusQ} is always nonnegative. From Lemma \ref{nonincreasing} we know that $\gamma(c_1)\geq \gamma(c)$, therefore it follows from \eqref{Elesso1} that $E(u_n-u)\leq o(1)$. Thus $E(u_n-u)=o(1)$. Since $A(v)$ and $-C(v)$ are nonnegative, we obtain from $E(u_n-u)= o(1)$, $Q(u_n-u)=o(1)$ and \eqref{EminusQ} that
\begin{align*}
A(u_n-u)&=o(1),\notag \\
C(u_n-u)&=o(1).
\end{align*}
But $E(u_n-u)$ is a linear combination of $A(u_n-u)$, $B(u_n-u)$ and $C(u_n-u)$, it follows immediately that $B(u_n-u)=o(1)$. Now the fourth and fifth statement of Proposition \ref{ps3} imply
\begin{equation*}
\frac{1}{2}A(u_n)+\beta D(u_n)+B(u_n)+C(u_n)=\frac{1}{2}A(u)+\beta D(u)+B(u)+C(u)+o(1).
\end{equation*}
Using the previous splitting properties one has then
\begin{align}
\beta D(u_n)&=\beta D(u)+o(1)\notag \\
&=\beta\big(D(u_n-u)+D(u)+o(1)\big).
\end{align}
From this we infer that $\beta D(u_n-u)=o(1)$. But from Lemma \ref{betaneq0}, $\beta>0$ for all $c\in(0,c_0)$. Thus $D(u_n-u)=o(1)$. Together with $A(u_n-u)=o(1)$ implies that $u_n\rightarrow u$ in $H^1(\R^3;\C)$ and $u\in S(c)$. From the splitting property of $E$ and the fact that $E(u_n-u)=o(1),\ E(u_n)=\gamma(c)+o(1)$ conclude that $E(u)=\gamma(c)$. From Lemma \ref{betaneq0} we know that a solution $u\in S(c)$ of \eqref{solution} is an element of $V(c)$. From Lemma \ref{infV(c)} we conclude that if additionally the solution $u$ satisfies $E(u)=\gamma(c)$, then it must be a ground state. This completes the proof of the second statement.

\textit{3.} The third statement is a very classical well-known result in literature, see for instance \cite[Chapter 8]{Cazenave2003}. Here, following the lines of \cite{Bellazzini2013}, we provide an alternative proof, which is shorter and less involved than the one given in \cite{Cazenave2003}.

We point out that we only need to show that $|u_c|$ is also a solution of \eqref{solution}. Indeed, having this we conclude immediately from the strong maximum principle that $|u_c(x)|$ is positive for all $x\in\R^3$. From Lemma \ref{betaneq0} we know that $|u_c|$ is an element of $V(c)$. From diamagnetic inequality and Lemma \ref{infV(c)} we then must have
$$\gamma(c)\leq E(|u_c|)\leq E(u_c)=\gamma(c),$$
which implies that $A(|u_c|)=A(u_c)$, and the existence of a $\theta\in\R$ with $u_c=e^{i\theta}|u_c|$ follows from \cite[Theorem 5]{BellazziniEtAl2017}.

We proceed our proof in two steps: First we show that $Q(|u_c|)=0$ and $E(|u_c|)=\gamma(c)$, from which we conclude that $|u_c|$ is a critical point of $V(c)$ due to Lemma \ref{infV(c)}; then we show that every critical point of $V(c)$ is also a critical point of $S(c)$, which finishes the desired proof. From the diamagnetic inequality we obtain that $Q(|u_c|)\leq Q(u_c)=0$. From Lemma \refpart{monotoneproperty}{3} we can find a $t\in(0,1]$ such that $Q(|u_c|^t)=0$. From Lemma \ref{infV(c)}, the diamagnetic inequality and Lemma \refpart{monotoneproperty}{5} we thus have
$$ \gamma(c)\leq E(|u_c|^t)=E(|u_c^t|)\leq E(u_c^t)\leq E(u_c)=\gamma(c).$$
Thus all inequalities must hold as equalities and we conclude that $t=1$, which finishes the first step. Now let $u$ be a critical point in $V(c)$. From the Lagrange multiplier theorem we obtain that there exist numbers $\mu_1$ and $\mu_2$ such that
\begin{align*}
E'(u)-\mu_1\,Q'(u)-2\mu_2\, u=0,
\end{align*}
from which we deduce that
\begin{align}\label{equality for pohozaev}
(1-2\mu_1)\,(-\Delta u)+2(1-3\mu_1)\,\big(\lambda_1\,|u|^2+(K*|u|^2)\big)\,u+(2-9\mu_1)\,\lambda_3\,|u|^3u-2\mu_2\,u=0.
\end{align}
Analogous to the proof of Lemma \ref{betaneq0} we obtain the following Pohozaev type identities:
\begin{align*}
(1-2\mu_1)\,A(u)+2(1-3\mu_1)\,B(u)+(2-9\mu_1)\,C(u)-2\mu_2\,\|u\|^2_2=0,\\
\frac{1}{2}(1-2\mu_1)\,A(u)+\frac{3}{2}(1-3\mu_1)\,B(u)+\frac{3}{5}(2-9\mu_1)\,C(u)-3\mu_2\,\|u\|^2_2=0.
\end{align*}
Eliminating $\|u\|^2_2$ from the last two equations we obtain that
\begin{align}\label{1}
(1-2\mu_1)\,A(u)+\frac{3}{2}(1-3\mu_1)\,B(u)+\frac{9}{10}(2-9\mu_1)\,C(u)=0.
\end{align}
Now since $u\in V(c)$ we know that
\begin{align}\label{2}
Q(u)=A(u)+\frac{3}{2}B(u)+\frac{9}{5}C(u)=0.
\end{align}
Eliminating $B(u)$ from \eqref{1} and \eqref{2} we get
\begin{align*}
\mu_1\Big(A(u)-\frac{27}{10}C(u)\Big)=0.
\end{align*}
Since $A(u)-\frac{27}{10}C(u)$ is positive, we must have $\mu_1=0$, which shows that $u$ is a critical point of $E(u)$ on $S(c)$.
\end{proof}


\begin{thebibliography}{10}

\bibitem{AbrahamEtAl1988}
R.~Abraham, J.~E. Marsden, and T.~Ratiu.
\newblock {\em Manifolds, tensor analysis, and applications}, volume~75 of {\em
  Applied Mathematical Sciences}.
\newblock Springer-Verlag, New York, second edition, 1988.

\bibitem{BellazziniEtAl2017}
J.~Bellazzini, N.~Boussa\"{i}d, L.~Jeanjean, and N.~Visciglia.
\newblock Existence and stability of standing waves for supercritical {NLS}
  with a partial confinement.
\newblock {\em Comm. Math. Phys.}, 353(1):229--251, 2017.

\bibitem{BellazziniJeanjean2016}
J.~Bellazzini and L.~Jeanjean.
\newblock On dipolar quantum gases in the unstable regime.
\newblock {\em SIAM J. Math. Anal.}, 48(3):2028--2058, 2016.

\bibitem{Bellazzini2013}
J.~Bellazzini, L.~Jeanjean, and T.~Luo.
\newblock Existence and instability of standing waves with prescribed norm for
  a class of {S}chr\"odinger-{P}oisson equations.
\newblock {\em Proc. Lond. Math. Soc. (3)}, 107(2):303--339, 2013.

\bibitem{Berestycki1983}
H.~Berestycki and P.-L. Lions.
\newblock Nonlinear scalar field equations. {II}. {E}xistence of infinitely
  many solutions.
\newblock {\em Arch. Rational Mech. Anal.}, 82(4):347--375, 1983.

\bibitem{BrezisLieb1983}
H.~Br\'ezis and E.~Lieb.
\newblock A relation between pointwise convergence of functions and convergence
  of functionals.
\newblock {\em Proc. Amer. Math. Soc.}, 88(3):486--490, 1983.

\bibitem{Carles2008}
R.~Carles, P.~A. Markowich, and C.~Sparber.
\newblock On the {G}ross-{P}itaevskii equation for trapped dipolar quantum
  gases.
\newblock {\em Nonlinearity}, 21(11):2569--2590, 2008.

\bibitem{Cazenave2003}
T.~Cazenave.
\newblock {\em Semilinear {S}chr\"odinger equations}, volume~10 of {\em Courant
  Lecture Notes in Mathematics}.
\newblock New York University, Courant Institute of Mathematical Sciences, New
  York; American Mathematical Society, Providence, RI, 2003.

\bibitem{DeLaire2012}
A.~de~Laire.
\newblock Nonexistence of traveling waves for a nonlocal {G}ross-{P}itaevskii
  equation.
\newblock {\em Indiana Univ. Math. J.}, 61(4):1451--1484, 2012.

\bibitem{Ghoussoub1993}
N.~Ghoussoub.
\newblock {\em Duality and perturbation methods in critical point theory},
  volume 107 of {\em Cambridge Tracts in Mathematics}.
\newblock Cambridge University Press, Cambridge, 1993.
\newblock With appendices by David Robinson.

\bibitem{KadauEtAl2016}
H.~Kadau, M.~Schmitt, M.~Wenzel, C.~Wink, T.~Maier, I.~Ferrier-Barbut, and
  T.~Pfau.
\newblock Observing the {R}osensweig instability of a quantum ferrofluid.
\newblock {\em Nature}, 530(7589):194--197, Feb 2016.
\newblock Letter.

\bibitem{lieb1986}
E.~H. Lieb and M.~Loss.
\newblock Stability of {C}oulomb systems with magnetic fields. {II}. {T}he
  many-electron atom and the one-electron molecule.
\newblock {\em Comm. Math. Phys.}, 104(2):271--282, 1986.

\bibitem{LuoStylianou2019Pre}
Y.~Luo and A.~Stylianou.
\newblock On 3d dipolar {B}ose-{E}instein condensates involving quantum
  fluctuations and three-body interactions.
\newblock 2019.
\newblock arXiv:1902.05591 [math.AP].

\bibitem{WenzelEtAl2017}
M.~Wenzel, F.~B\"ottcher, T.~Langen, I.~Ferrier-Barbut, and T.~Pfau.
\newblock Striped states in a many-body system of tilted dipoles.
\newblock {\em Phys. Rev. A}, 96:053630, Nov 2017.

\end{thebibliography}
\end{document}